\definecolor{mygray}{gray}{.9}
\definecolor{mypink}{rgb}{.99,.91,.95}
\definecolor{mycyan}{cmyk}{.3,0,0,0}
\newcommand{\vertiii}[1]{{\left\vert\kern-0.25ex\left\vert\kern-0.25ex\left\vert #1
    \right\vert\kern-0.25ex\right\vert\kern-0.25ex\right\vert}}
\newtheorem{remark}{Remark}[section]
\def\diam{\operatornamewithlimits{diam}}
\def\s{{\mathcal S}}
\def\T{{\mathcal T}}
\def\E{{\mathcal E}}
\def\bn{{\boldsymbol n}}
\def\bq{{\boldsymbol q}}
\def\bv{{\boldsymbol v}}
\def\pT{{\partial T}}
\newcommand{\bu}{{\bm u}}
\newcommand{\bV}{{\boldsymbol V}}
\newcommand\subsetsim{\mathrel{%
  \ooalign{\raise0.2ex\hbox{$\subset$}\cr\hidewidth\raise-0.8ex\hbox{\scalebox{0.9}{$\sim$}}\hidewidth\cr}}}
\def\leqC{\lesssim}
\def\T{{\mathcal T}}
\def\E{{\mathcal E}}
\def\functionalaction#1#2{{ (\!(#1, #2)\!)}}
\def\bn{{\boldsymbol n}}
\def\bq{{\boldsymbol q}}
\def\bH{{\bf H}}
\def\boldeta{{\boldsymbol\eta}}
\def\bpsi{{\boldsymbol\psi}}
\def\bvarphi{{\boldsymbol\varphi}}
\def\HnHarmonic{\mathbb{H}_{n,0}(\Omega)}
\def\bbQ{\mathbb{Q}}
\newcommand{\curl}{{\nabla\times}}
\newcommand{\PreserveBackslash}[1]{\let\temp=\\#1\let\\=\temp}
\newcolumntype{C}[1]{>{\PreserveBackslash\centering}p{#1}}
\newcolumntype{R}[1]{>{\PreserveBackslash\raggedleft}p{#1}}
\newcolumntype{L}[1]{>{\PreserveBackslash\raggedright}p{#1}}
\def\3bar{{|\!|\!|}}
\def\bx{{\boldsymbol{x}}}
\newtheorem{algorithm}{Algorithm}[section]
\title{A Primal-Dual Weak Galerkin Method for Div-Curl Systems with low-regularity solutions}
\author{Yujie Liu\thanks{Artificial Intelligence Research Center, Peng Cheng Laboratory, Shenzhen 518005, China (liuyj02@pcl.ac.cn). The research of Liu was partially supported by National Natural Science
Foundation of China (No. 12001306), Guangdong Provincial Natural Science Foundation (No. 2017A030310285), Shandong Provincial Natural Science Foundation (No. ZR2016AB15) and Youthful Teacher Foster Plan Of Sun Yat-Sen University (No. 171gpy118),} \and Junping Wang
\thanks{Division of Mathematical Sciences, National Science Foundation, Alexandria, VA 22314 (jwang@nsf.gov). The research of Wang was supported in part by the NSF IR/D program, while working at National Science Foundation. However, any opinion, finding, and conclusions or recommendations expressed in this material are those of the author and do not necessarily reflect the views of the National Science Foundation.}}
\begin{document}

\maketitle

\begin{abstract}
This article presents a new primal-dual weak Galerkin finite element method for the div-curl system with tangential boundary conditions and low-regularity assumptions on the solution. The numerical scheme is based on a weak variational form involving no partial derivatives of the exact solution supplemented by a dual or adjoint problem in the general context of the weak Galerkin finite element method. Optimal order error estimates in $L^2$ are established for solution vector fields in $H^\theta(\Omega),\ \theta>\frac12$. The mathematical theory was derived on connected domains with general topological properties (namely, arbitrary first and second Betti numbers). Numerical results are reported to confirm the theoretical convergence.
\end{abstract}

\begin{keywords}
primal-dual weak Galerkin, finite element methods, div-curl system, tangential boundary conditions, low-regularity.
\end{keywords}

\begin{AMS}
Primary 65N30, 65N12, 65N15; Secondary 35Q60, 35B45
\end{AMS}

\pagestyle{myheadings}

\section{Introduction}
This paper is concerned with the development of a primal-dual weak Galerkin finite element method for div-curl systems equipped with tangential boundary conditions. For simplicity, consider the problem of seeking a vector field $\bu$ satisfying
\begin{subnumcases}
~~~~~~~\nabla\cdot(\varepsilon \bm{u})=f,&${\rm in}\  \Omega$, \label{EQ:div-curl-1}\\
~~~~~~~~~\nabla\times \bm{u}= \bm{g},&${\rm in}\  \Omega$, \label{EQ:div-curl-2}\\
~~~~~~~~~\bm{u} \times \bm{n}=\bm{\chi},&${\rm on}\ \Gamma$, \label{EQ:div-curl-TBC1}\\
\langle \varepsilon \bm{u} \cdot \bm{n}_i,1\rangle_{\Gamma_i}=\alpha_i,&$i=1,...,L,$ \label{EQ:div-curl-TBC2}
\end{subnumcases}
where $\Omega$ is an open bounded and connected domain in ${\mathbb R}^3 $, with Lipschitz continuous boundary $\Gamma = \partial \Omega$ which consists of a finite number of disjoint surfaces $
\Gamma=\mathop\bigcup_{i=0}^{L}\Gamma_i$, where each component $\Gamma_i$ is connected, Lipschitz continuous, and has finite surface area.
$\Gamma_0$ is the exterior boundary of the domain, and each $\Gamma_i$ corresponds to a ``hole" so that $L$ geometrically counts the number of holes in the region $\Omega$. The number $L$ is known as the second Betti number of $\Omega$ or the dimension of the second de Rham cohomology group of $\Omega$. In the equation \eqref{EQ:div-curl-1},  $\varepsilon= \{\varepsilon_{ij}(x)\}_{3\times3}$ is a symmetric and uniformly positive definite matrix in $\Omega$ with entries in $L^{\infty}(\Omega)$. The load function $f = f(\bx)$ is Lebesgue-integrable and real-valued, and the vector field $\bm{g} = \bm{g}(\bx)$ are given in the domain $\Omega$. The tangential boundary condition \eqref{EQ:div-curl-TBC1} corresponds to a given value for the tangential component of the vector field $\bm{u}$, where $\bm{n}_i$ is the unit outward normal direction on $\Gamma_i$,
$\bm{\chi}\in \textcolor{blue}{[L^2(\Gamma)]^{3}}$ is a given vector field on the boundary. 

The div-curl system has many important applications in computational fluid and electromagnetic problems \cite{Bossavit_1998}. For this reason, various numerical methods have been developed for solving this system in the last several decades. A control volume method was proposed by Nicolaides \cite{Nicolaides_1992} for the planar div-curl problems in 1992, thereafter a co-finite volume method was developed by Nicolaides and Wu \cite{Nicolaides_1997} for three dimensional div-curl problems. Delcourte \textit{et al.} \cite{DelcourteDomelevoOmnes_2007} proposed a discrete duality finite volume method for the div-curl problems on almost arbitrary polygonal meshes. Bramble and Pasciak \cite{BramblePasciak_2003} developed a finite element formulation for the div-curl systems under a very weak formulation where the solution space was $[L_2(\Omega)]^3$. In \cite{CopelandGopalakrishnanPasciak_2008}, Copeland \textit{et al.} presented a mixed finite element method for 3D axisymmetric div-curl systems, which reduces the computational domain from 3D to 2D via cylindrical coordinates in simply connected axisymmetric domains. A least-squares method based on discontinuous elements was propose by Bensow and Larson in \cite{BochevPetersonSiefert_2011}.
Bochev \textit{et al.} \cite{Bensow2005} proposed a least-squares finite element methods for two div-curl elliptic boundary value problems. The mimetic finite difference method was proposed by Brezzi \textit{et al.} \cite{BrezziBuffa_2010, LipnikovManziniBrezzi_2011} and \textcolor{blue}{applied} to the 3D magnetostatic problems  on general polyhedral meshes. Most recently, Wang \textit{et al.} \cite{WangWang_divcurl_2016} developed a weak Galerkin method for the div-curl systems with either normal or tangential boundary conditions. Ye \textit{et al.} proposed least-squares methods \cite{LiYeZhang_2018,YeZhangZhu_2020} for the div-curl problems, which result in symmetric positive definite linear systems.

For the div-curl system \eqref{EQ:div-curl-1}-\eqref{EQ:div-curl-TBC2} to be well-posed, the functional data in the system must satisfy certain compatibility conditions (e.g., the equations \eqref{CompatibilityC:01}, \eqref{EQ:divcurl-compatibility:01}, and \eqref{EQ:divcurl-compatibility:03}). In fact, the tangential boundary value problem \eqref{EQ:div-curl-1}-\eqref{EQ:div-curl-TBC2} has one and only one solution if all the desired compatibility conditions are met. One of the main challenges in the design of numerical methods for \eqref{EQ:div-curl-1}-\eqref{EQ:div-curl-TBC2} is the low-regularity nature of the exact solution $\bu$. The goal of this paper is to address this challenge by devising a primal-dual weak Galerkin (PDWG) scheme which provides reliable numerical approximations for solutions with low-regularity. In particular, the new PDWG finite element method will approximate the vector field $\bu$ by using piecewise polynomials in $[L^2(\Omega)]^2$, and an optimal order error estimate shall be derived in $L^2$ for solutions in $H^\theta(\Omega), \ \theta>\frac12$. The primal-dual idea for solving PDEs was also developed by Burman \cite{burman2013, burman2014} in other finite element contexts. The PDWG method has been successfully applied to several challenging problems including the second order elliptic equation in non-divergence form \cite{WangWang_2016}, the Fokker-Planck equation \cite{WangWang_2017}, the elliptic Cauchy problem \cite{ww2020-ec}, and linear transport problems \cite{ww2020-transport}.

Throughout the paper, we follow the usual notation for Sobolev spaces and norms as in \cite{Ciarlet_1978, girault-raviart}. For any open bounded domain $D \subset {\mathbb R}^3$ with Lipschitz continuous boundary, we use $\|\cdot\|_{s,D}$ and $|\cdot|_{s,D}$ to denote the norm and seminorm in the Sobolev space $H^s(D)$ for any $s\geq 0$, respectively. The inner product in $H^s(D)$ is denoted by $(\cdot,\cdot)_{s,D}$.
The space $H^0(D)$ coincides with $L^2(D)$, for which the norm and the inner product are denoted by $\|\cdot\|_D$ and $(\cdot,\cdot)_{D}$, respectively.  We use $H(div_\varepsilon;D)$ to denote the closed subspace of $[L^2(D)]^3$ so that $\nabla\cdot(\varepsilon\bv)\in L^2(D)$. The space $H(div;D)$ corresponds to the case of the identity matrix $\varepsilon=I$. Analogously, we use $H(curl;D)$ to denote the closed subspace of $[L^2(D)]^3$ so that $\nabla\times\bv\in [L^2(D)]^3$. The space of normal $\varepsilon$-harmonic vector fields, denoted by $\mathbb{H}_{\varepsilon n,0}(\Omega)$, consists of all $\varepsilon$-harmonic vector fields satisfying the zero normal boundary condition; i.e.,
$$
\mathbb{H}_{\varepsilon n,0}(\Omega)=\{\bv\in [L^2(\Omega)]^3: \ \curl\bv=0,\
\nabla\cdot(\varepsilon \bv)=0,\ \varepsilon\bv\cdot \bn = 0 \mbox{ on } \Gamma\}.
$$
When $\varepsilon=I$ is the identity matrix, the spaces $\mathbb{H}_{\varepsilon n,0}(\Omega)$ shall be denoted as $\mathbb{H}_{n,0}(\Omega)$.
\section{A Weak Formulation}
Denote by $\bar{H}^1(\Omega)=H^1(\Omega)/\mathbb{R}$ the quotient space and
$$
H^1_{0c}(\Omega)=\{w\in H^1(\Omega):\ w|_{\Gamma_0}=0, \ w|_{\Gamma_i}=c_i,\ i=1,\cdots, L, \ c_i \in  \mathbb{R}\}
$$
the closed subspace of $H^1(\Omega)$ with vanishing value on $\Gamma_0$ and constant value on each $\Gamma_i, \ i=1,\cdots, L$.

A {\it weak formulation} for the tangential boundary value problem \eqref{EQ:div-curl-1}-\eqref{EQ:div-curl-TBC2} seeks  $\bu\in [L^2(\Omega)]^3$ and $s \in {\bar{H}^1(\Omega)}$ such that
\begin{equation}\label{EQ:divcurl-tbv-weakform}
b(\bu, s; \varphi,\bpsi) = G(\varphi,\bpsi),\quad \forall \varphi\in H^1_{0c}(\Omega),\ \bpsi\in H(curl;\Omega),
\end{equation}
where
\begin{eqnarray}\label{EQ:div-curl:tbvp:bform}
&&b(\bu, s; \varphi,\bpsi): = (\bu, \varepsilon \nabla \varphi + \nabla \times \bpsi) + (\bpsi, \nabla s),\\
&&G(\varphi,\bm{\psi}):=(\bm{g},\bpsi)-(f,\varphi)+
\langle\bm{\chi}\times\bn,\bpsi\times\bn\rangle +\sum_{i=1}^L\alpha_i \varphi|_{\Gamma_i}. \label{EQ:div-curl:tbvp:gform}
\end{eqnarray}
The corresponding homogeneous dual or adjoint problem for \eqref{EQ:divcurl-tbv-weakform} seeks $\lambda\in H^1_{0c}(\Omega)$ and $\bq \in H(curl;\Omega)$ such that
\begin{eqnarray}\label{EQ:div-curl-tbvp:dual-problem}
&&b(\bm{v}, r; \lambda,\bq)=0,\qquad \forall \bv\in [L^2(\Omega)]^3,\ r\in \bar{H}^1(\Omega).
\end{eqnarray}

For \eqref{EQ:div-curl-1}-\eqref{EQ:div-curl-TBC2} to be well-imposed, certain compatibility conditions must be satisfied for the boundary value and the load functions $f$ and $\bm{g}$.  First, from the curl equation \eqref{EQ:div-curl-2} we have
\begin{equation}\label{CompatibilityC:01}
\nabla\cdot\bm{g} = 0.
\end{equation}
Next, as $\bm{\chi}=\bm{u}\times{\bn}$ is orthogonal to the normal direction $\bn$, hence
\begin{equation}\label{CompatibilityC:02}
\bm{\chi} =  \bn\times (\bm{\chi}\times\bn).
\end{equation}
By testing the equation \eqref{EQ:div-curl-2} against any $\bm{\psi}\in H(curl;\Omega)$ we have from the Green's formula and the tangential boundary condition \eqref{EQ:div-curl-TBC1} that
\begin{eqnarray}\label{EQ:variational-form-2:02}
(\bm{u}, \nabla \times \bm{\psi}) = (\bm{g},\bm{\psi})+\langle\bm{\chi}, \bm{\psi}\rangle,\qquad \forall \;\bm{\psi}\in H(curl;\Omega).
\end{eqnarray}
By letting $\bm{\psi}=\nabla\rho$ in \eqref{EQ:variational-form-2:02} and then using \eqref{CompatibilityC:01}-\eqref{CompatibilityC:02}, we arrive at the following compatibility condition:
\begin{equation}\label{EQ:divcurl-compatibility:01}
\langle\bm{g}\cdot\bn,\rho\rangle+\langle\bm{\chi}\times\bn, \nabla\rho\times\bn\rangle = 0,\qquad \forall\; \rho\in H^1(\Omega).
\end{equation}
Analogously, by letting $\bm{\psi}=\bm{\eta} \in \mathbb{H}_{n,0}(\Omega)$ in \eqref{EQ:variational-form-2:02}, we have
\begin{equation}\label{EQ:divcurl-compatibility:02}
(\bm{g},\bm{\eta})+\langle\bm{\chi}, \bm{\eta}\rangle = 0,\qquad \forall\; \bm{\eta} \in \mathbb{H}_{n,0}(\Omega),
\end{equation}
which, together with \eqref{CompatibilityC:02}, leads to
\begin{equation}\label{EQ:divcurl-compatibility:03}
(\bm{g}, \bm{\eta}) +\langle\bm{\chi}\times\bn, \bm{\eta}\times\bn\rangle =0,\qquad \forall \;\bm{\eta} \in \mathbb{H}_{n,0}(\Omega).
\end{equation}

\medskip

\begin{theorem}
The solution $(\bu,s) \in [L^2(\Omega)]^3\times {\bar{H}^1(\Omega)}$ for the primal problem \eqref{EQ:divcurl-tbv-weakform} is unique.
\end{theorem}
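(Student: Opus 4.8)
The plan is to exploit linearity: since \eqref{EQ:divcurl-tbv-weakform} is linear in $(\bu,s)$, it suffices to prove that the only pair $(\bu,s)\in[L^2(\Omega)]^3\times\bar H^1(\Omega)$ with $b(\bu,s;\varphi,\bpsi)=0$ for all $\varphi\in H^1_{0c}(\Omega)$ and $\bpsi\in H(curl;\Omega)$ is $\bu=0$ and $s=0$ in $\bar H^1(\Omega)$. I would extract information from a few judicious choices of test functions. First take $\varphi=0$ and $\bpsi=\nabla\rho$ for $\rho\in H^1(\Omega)$; since $\curl\nabla\rho=0\in[L^2(\Omega)]^3$, the identity $b(\bu,s;0,\nabla\rho)=0$ collapses to $(\nabla\rho,\nabla s)=0$ for all $\rho\in H^1(\Omega)$. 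Choosing $\rho=s$ gives $\|\nabla s\|_\Omega=0$, so $s$ is constant, i.e. $s=0$ in $\bar H^1(\Omega)$. The remaining relation is then $(\bu,\varepsilon\nabla\varphi+\curl\bpsi)=0$ for all admissible $\varphi,\bpsi$.

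Next I would set $\bpsi=0$. Testing with $\varphi\in C_0^\infty(\Omega)\subset H^1_{0c}(\Omega)$ shows $\nabla\cdot(\varepsilon\bu)=0$ in the sense of distributions, hence $\bu\in\Hvarediv{\Omega}$ and the normal trace $\varepsilon\bu\cdot\bn\in H^{-\frac12}(\Gamma)$ is well defined. For a general $\varphi\in H^1_{0c}(\Omega)$, integration by parts then yields $0=(\varepsilon\bu,\nabla\varphi)=\langle\varepsilon\bu\cdot\bn,\varphi\rangle_\Gamma=\sum_{i=1}^{L}(\varphi|_{\Gamma_i})\,\langle\varepsilon\bu\cdot\bn_i,1\rangle_{\Gamma_i}$, where I used $\varphi|_{\Gamma_0}=0$ and that $\varphi|_{\Gamma_i}$ is an arbitrary constant; therefore $\langle\varepsilon\bu\cdot\bn_i,1\rangle_{\Gamma_i}=0$ for $i=1,\dots,L$. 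Symmetrically, taking $\varphi=0$ and $\bpsi\in[C_0^\infty(\Omega)]^3$ shows $\curl\bu=0$ distributionally, so $\bu\in H(curl;\Omega)$ and its tangential trace is well defined; the Green's formula for the curl operator together with $\curl\bu=0$ then gives $\langle\bu\times\bn,\bpsi\rangle_\Gamma=0$ for all $\bpsi\in H(curl;\Omega)$, and since the tangential traces of $H(curl;\Omega)$ span the trace space we conclude $\bu\times\bn=0$ on $\Gamma$.

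At this stage $\bu$ is curl-free, satisfies $\nabla\cdot(\varepsilon\bu)=0$, has vanishing tangential trace $\bu\times\bn=0$ on $\Gamma$, and has zero $\varepsilon$-flux through every $\Gamma_i$. The remaining task — which I expect to be the main obstacle — is to deduce $\bu\equiv0$; this is precisely the uniqueness assertion for the homogeneous classical div-curl system and is where the topology of $\Omega$ enters. The space of $\varepsilon$-harmonic fields with vanishing tangential trace, $\{\bv\in[L^2(\Omega)]^3:\curl\bv=0,\ \nabla\cdot(\varepsilon\bv)=0,\ \bv\times\bn=0\text{ on }\Gamma\}$, has dimension equal to the second Betti number $L$, and the functionals $\bv\mapsto\langle\varepsilon\bv\cdot\bn_i,1\rangle_{\Gamma_i}$, $i=1,\dots,L$, are linearly independent on this space, so the simultaneous vanishing of all these fluxes forces $\bu=0$. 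I would justify this either by invoking the standard structure theory of div-curl systems and vector-field decompositions on multiply connected domains (e.g. \cite{girault-raviart,WangWang_divcurl_2016}), or directly: using $\bu\times\bn=0$ one builds a scalar potential with piecewise-constant boundary values on the $\Gamma_i$, which reduces the problem to a scalar second-order elliptic equation for the operator $\nabla\cdot(\varepsilon\nabla\cdot)$ with Dirichlet-type data and $L$ flux constraints whose only solution is trivial. Together with $s=0$ from the first step, this establishes uniqueness.
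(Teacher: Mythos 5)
Your argument is correct, but it follows a genuinely different route from the paper. The paper disposes of $\bu$ in one stroke: it invokes the Helmholtz decomposition of Theorem \ref{THM:helmholtz-2:02} to write $\bu=\nabla\varphi+\varepsilon^{-1}\nabla\times\bpsi$ with $\varphi\in H^1_{0c}(\Omega)$, $\nabla\cdot\bpsi=0$ and $\bpsi\cdot\bn=0$, inserts exactly this pair as test functions into the homogeneous equation, and reads off $(\varepsilon\bu,\bu)=0$; the conclusion $s=0$ then follows afterwards from $(\bpsi,\nabla s)=0$ for all $\bpsi$. You instead eliminate $s$ first (via $\bpsi=\nabla s$, which is clean) and then recover the strong homogeneous system $\curl\bu=0$, $\nabla\cdot(\varepsilon\bu)=0$, $\bu\times\bn=0$, $\langle\varepsilon\bu\cdot\bn_i,1\rangle_{\Gamma_i}=0$, concluding by the structure theory of $\varepsilon$-harmonic fields with vanishing tangential trace (dimension equal to the second Betti number $L$, with the flux functionals forming a dual basis). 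Both proofs are valid. The paper's is shorter and entirely self-contained given the appendix, since the decomposition does all the topological work invisibly; yours is more informative about why precisely these four homogeneous conditions pin down $\bu$, but it leans on heavier machinery that you only sketch: the duality pairing of tangential traces for $H(curl;\Omega)$ on a Lipschitz boundary, and, in your ``direct'' variant, the fact that a curl-free field with $\bu\times\bn=0$ is a global gradient with locally constant boundary potential --- this uses that every $1$-cycle of $\Omega$ is homologous to one in $\Gamma$ (true for bounded domains in $\mathbb{R}^3$, but it should be stated or cited, e.g.\ from \cite{girault-raviart}). With those two points made precise, your proof stands on its own.
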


\begin{proof}
It suffices to show that solutions to the homogeneous problem must be trivial. To this end, let $(\bu,s) \in [L^2(\Omega)]^3\times {\bar{H}^1(\Omega)}$ be a solution of \eqref{EQ:divcurl-tbv-weakform} with homogeneous data, i.e.,
\begin{eqnarray}
(\bu, \varepsilon \nabla \varphi + \nabla \times \bpsi) + (\bpsi, \nabla s) = 0 \label{EQ:divcurl-tbv-weakform-homo}
\end{eqnarray}
for all  $\varphi \in H^1_{0c}(\Omega)$ and $\bm{\psi} \in H(curl;\Omega).$ From the Helmholtz decomposition
\eqref{EQ:helmholtz-2:02}, we may choose $\varphi$ and $\bm{\psi}\perp \mathbb{H}_{n,0}(\Omega)$ such that
\begin{equation*}
\begin{split}
\bu =  \nabla \varphi + \varepsilon^{-1} \nabla \times \bpsi,\quad  \mbox{in } \Omega,\\
\nabla\cdot\bpsi = 0, \quad \mbox{in } \Omega,\\
\bpsi\cdot\bn = 0.\qquad \mbox{on } \partial\Omega.
\end{split}
\end{equation*}
Substituting the above into \eqref{EQ:divcurl-tbv-weakform-homo} yields $(\varepsilon\bu, \bu)=0$ so that $\bu\equiv 0$. It follows that $(\bpsi,\nabla s)=0$ for all $\bm{\psi} \in H(curl;\Omega)$ so that $\nabla s =0$, and hence $s=const=0$.
\end{proof}

\medskip
\begin{theorem}
Assume the compatibility conditions \eqref{CompatibilityC:01}, \eqref{CompatibilityC:02}, and \eqref{EQ:divcurl-compatibility:01} hold true.  For any weak solution $(\bu,s) \in [L^2(\Omega)]^3\times {\bar{H}^1(\Omega)}$ of the variational problem \eqref{EQ:divcurl-tbv-weakform}, there hold\textcolor{blue}{s} $s=0$ and that $\bu$ satisfies the div-curl system \eqref{EQ:div-curl-1}-\eqref{EQ:div-curl-TBC2} in the strong form.
\end{theorem}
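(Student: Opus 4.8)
The plan is to proceed in three steps. First I would show $s=0$ by testing \eqref{EQ:divcurl-tbv-weakform} only against gradient fields; then recover the two interior equations \eqref{EQ:div-curl-1}--\eqref{EQ:div-curl-2} by testing against compactly supported functions; and finally recover the two boundary conditions \eqref{EQ:div-curl-TBC1}--\eqref{EQ:div-curl-TBC2} via Green's formulas, which become legitimate once the interior regularity $\bu\in H(curl;\Omega)$ and $\varepsilon\bu\in H(div_\varepsilon;\Omega)$ has been established. For the first step, note that for every $r\in H^1(\Omega)$ the field $\nabla r$ lies in $H(curl;\Omega)$ with $\curl\nabla r=0$; taking $\varphi=0$ and $\bm{\psi}=\nabla r$ in \eqref{EQ:divcurl-tbv-weakform} gives $(\nabla r,\nabla s)=G(0,\nabla r)=(\bm{g},\nabla r)+\langle\bm{\chi}\times\bn,\nabla r\times\bn\rangle$. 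Since $\nabla\cdot\bm{g}=0$ by \eqref{CompatibilityC:01}, integration by parts gives $(\bm{g},\nabla r)=\langle\bm{g}\cdot\bn,r\rangle$, so $G(0,\nabla r)$ is exactly the left-hand side of \eqref{EQ:divcurl-compatibility:01} and hence vanishes. Thus $(\nabla r,\nabla s)=0$ for all $r\in H^1(\Omega)$; taking $r=s$ gives $\nabla s\equiv 0$, and since $\Omega$ is connected this forces $s$ to be constant, i.e.\ $s=0$ in $\bar{H}^1(\Omega)=H^1(\Omega)/\mathbb{R}$.

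For the second step, with $s=0$ the weak equation reads $(\bu,\varepsilon\nabla\varphi+\curl\bm{\psi})=G(\varphi,\bm{\psi})$. Testing with $\bm{\psi}=0$ and $\varphi\in C_0^\infty(\Omega)\subset H^1_{0c}(\Omega)$, whose boundary traces all vanish, yields $(\varepsilon\bu,\nabla\varphi)=-(f,\varphi)$, so $\nabla\cdot(\varepsilon\bu)=f$ in the sense of distributions; this is \eqref{EQ:div-curl-1}, and (with $f\in L^2(\Omega)$) it gives $\varepsilon\bu\in H(div_\varepsilon;\Omega)$, hence a well-defined normal trace $\varepsilon\bu\cdot\bn$ on $\Gamma$. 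Testing with $\varphi=0$ and $\bm{\psi}\in[C_0^\infty(\Omega)]^3$ yields $(\bu,\curl\bm{\psi})=(\bm{g},\bm{\psi})$, so $\curl\bu=\bm{g}$ in the sense of distributions; this is \eqref{EQ:div-curl-2}, and it gives $\bu\in H(curl;\Omega)$, hence a well-defined tangential trace.

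For the third step, take $\varphi=0$ and an arbitrary $\bm{\psi}\in H(curl;\Omega)$. Using $\bm{\chi}=\bn\times(\bm{\chi}\times\bn)$ from \eqref{CompatibilityC:02} together with the identity $(\bn\times\bm{a})\cdot\bm{b}=\bm{a}\cdot(\bm{b}\times\bn)$, the boundary term of $G$ satisfies $\langle\bm{\chi}\times\bn,\bm{\psi}\times\bn\rangle=\langle\bm{\chi},\bm{\psi}\rangle$, so the weak equation is precisely \eqref{EQ:variational-form-2:02}, namely $(\bu,\curl\bm{\psi})=(\bm{g},\bm{\psi})+\langle\bm{\chi},\bm{\psi}\rangle$. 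On the other hand, since $\bu\in H(curl;\Omega)$ the Green's formula gives $(\bu,\curl\bm{\psi})=(\curl\bu,\bm{\psi})+\langle\bu\times\bn,\bm{\psi}\rangle=(\bm{g},\bm{\psi})+\langle\bu\times\bn,\bm{\psi}\rangle$. Comparing, $\langle\bu\times\bn-\bm{\chi},\bm{\psi}\rangle=0$ for all $\bm{\psi}\in H(curl;\Omega)$, and since the tangential traces of $H(curl;\Omega)$ exhaust the relevant trace space we conclude $\bu\times\bn=\bm{\chi}$ on $\Gamma$, which is \eqref{EQ:div-curl-TBC1}. Finally take $\bm{\psi}=0$ and a general $\varphi\in H^1_{0c}(\Omega)$: integrating by parts in $(\varepsilon\bu,\nabla\varphi)$ (legitimate since $\varepsilon\bu\in H(div_\varepsilon;\Omega)$) and comparing with $G(\varphi,0)=-(f,\varphi)+\sum_{i=1}^L\alpha_i\varphi|_{\Gamma_i}$ gives $\langle\varepsilon\bu\cdot\bn,\varphi\rangle=\sum_{i=1}^L\alpha_i\varphi|_{\Gamma_i}$; since $\varphi|_{\Gamma_0}=0$ and $\varphi|_{\Gamma_i}=c_i$ with the $c_i$ arbitrary constants, selecting one $c_i$ at a time isolates $\langle\varepsilon\bu\cdot\bn_i,1\rangle_{\Gamma_i}=\alpha_i$ for $i=1,\dots,L$, which is \eqref{EQ:div-curl-TBC2}.

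The first two steps are essentially formal; the point requiring genuine care is the third, where each duality pairing on $\Gamma$ must be interpreted in the correct $H^{-1/2}$ tangential (resp.\ normal) trace space for $H(curl;\Omega)$ (resp.\ $H(div_\varepsilon;\Omega)$), and where the identifications $\bu\times\bn=\bm{\chi}$ and $\langle\varepsilon\bu\cdot\bn_i,1\rangle_{\Gamma_i}=\alpha_i$ depend on the density of the associated trace maps. This, rather than any computation, is the main obstacle, and it is precisely where the low-regularity hypotheses must be handled with care.
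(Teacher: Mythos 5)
Your proposal is correct and follows essentially the same route as the paper: test with $\bm{\psi}=\nabla s$ (using the compatibility conditions \eqref{CompatibilityC:01} and \eqref{EQ:divcurl-compatibility:01}) to force $s=0$, then split into the $\bm{\psi}=0$ and $\varphi=0$ cases and integrate by parts to recover the equations and the boundary conditions, with \eqref{CompatibilityC:02} giving $\bn\times(\bm{\chi}\times\bn)=\bm{\chi}$. The only difference is that you derive the regularity $\bu\in H(curl;\Omega)$ and $\varepsilon\bu\in H(div_\varepsilon;\Omega)$ from the distributional identities before invoking the trace pairings, whereas the paper simply assumes it; your ordering is the more careful one, but the argument is the same.
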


\begin{proof}
By letting $\bm{\psi}=0$ in \eqref{EQ:divcurl-tbv-weakform} we have
$$
(\bm{u}, \varepsilon \nabla \varphi) = -(f,\varphi) + \sum_{i=1}^L \alpha_i \varphi|_{\Gamma_i}\qquad \forall \varphi\in H^1_{0c}(\Omega),
$$
which, with the integration by parts, leads to 
\begin{eqnarray*}
\nabla\cdot (\varepsilon\bu) &=& f\qquad \mbox{in } \Omega, \\
\langle \varepsilon\bu\cdot\bn_i, 1\rangle_{\Gamma_i} &=& \alpha_i,\;\quad i=1,2,\cdots, L,
\end{eqnarray*}
so that \eqref{EQ:div-curl-1} and \eqref{EQ:div-curl-TBC2} are satisfied. Next, by choosing $\varphi=0$ and $\bm{\psi}=\nabla s$ in \eqref{EQ:divcurl-tbv-weakform} we obtain
$$
(\nabla s, \nabla s) = (\bm{g}, \nabla s) + \langle\bm{\chi}\times\bn, \nabla s \times\bn\rangle,
$$
which, together with the compatibility conditions \eqref{CompatibilityC:01} and \eqref{EQ:divcurl-compatibility:01}, leads to $\nabla s=0$ and hence $s\equiv 0$.

Now, by letting $\varphi=0$ in \eqref{EQ:divcurl-tbv-weakform}, we have  for any $\bm{\psi}\in H(curl;\Omega)$
\begin{eqnarray*}
(\bm{u}, \nabla \times \bm{\psi})  = (\bm{g},\bm{\psi})+\langle\bm{\chi}\times\bn,\bm{\psi}\times\bn\rangle,
\end{eqnarray*}
which leads to
\begin{eqnarray*}
(\nabla\times\bm{u}, \bm{\psi}) + \langle\bu\times\bn, \bm{\psi}\rangle = (\bm{g},\bm{\psi})+\langle\bm{\chi}\times\bn,\bm{\psi}\times\bn\rangle,
\end{eqnarray*}
and hence
\begin{eqnarray}\label{EQ:11:04:222}
\nabla\times\bm{u}  &=& \bm{g},\qquad \mbox{ in } \Omega,\\
\bu\times \bn = \bn\times(\bm{\chi}\times\bn)&=&\bm{\chi},\qquad \mbox{ on } \Gamma.\label{EQ:11:04:223}
\end{eqnarray}
Equation \eqref{EQ:11:04:223} verifies the tangential boundary condition
\eqref{EQ:div-curl-TBC1}, and the curl equation  \eqref{EQ:div-curl-2} is seen from \eqref{EQ:11:04:222}. This completes the proof of the theorem.
\end{proof}

\medskip

\begin{theorem}\label{THM:dual-problem-TBVP} The homogeneous dual problem \eqref{EQ:div-curl-tbvp:dual-problem} has only ``trivial'' solutions for $\lambda$; i.e., if $\lambda\in H_{0c}^1(\Omega)$ and $\bq\in H(curl;\Omega)$ satisfy the weak form \eqref{EQ:div-curl-tbvp:dual-problem}, then we must have $\lambda=0 $ and that $\bq\in\mathbb{H}_{n,0}(\Omega)$ is a harmonic field.
\end{theorem}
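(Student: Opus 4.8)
The plan is to exploit the fact that in the homogeneous dual problem \eqref{EQ:div-curl-tbvp:dual-problem} the test pair $(\bv,r)$ ranges over the \emph{full} product space $[L^2(\Omega)]^3\times\bar{H}^1(\Omega)$, so that the single identity $b(\bv,r;\lambda,\bq)=0$ decouples into two separate statements. Writing out $b$ from \eqref{EQ:div-curl:tbvp:bform}, the choice $r=0$ with $\bv$ arbitrary in $[L^2(\Omega)]^3$ forces the $L^2$-identity
\begin{equation*}
\varepsilon\nabla\lambda + \nabla\times\bq = 0 \quad\text{in } [L^2(\Omega)]^3,
\end{equation*}
while the choice $\bv=0$ with $r$ arbitrary in $\bar{H}^1(\Omega)$ (equivalently in $H^1(\Omega)$, since $\nabla r$ is insensitive to additive constants) gives $(\bq,\nabla r)=0$ for all $r\in H^1(\Omega)$. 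Testing this last relation first against $r\in C_0^\infty(\Omega)$ shows $\nabla\cdot\bq=0$ in $\Omega$, so $\bq\in H(\mathrm{div};\Omega)$; feeding this back and integrating by parts then yields $\langle\bq\cdot\bn,r\rangle_\Gamma=0$ for every $r\in H^1(\Omega)$, i.e.\ $\bq\cdot\bn=0$ on $\Gamma$.

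Next I would recover $\lambda$ by an energy argument. Since $\nabla\lambda\in[L^2(\Omega)]^3$, take the $L^2$-inner product of the first identity with $\nabla\lambda$ to obtain $(\varepsilon\nabla\lambda,\nabla\lambda)+(\nabla\times\bq,\nabla\lambda)=0$. Because $\nabla\times(\nabla\lambda)=0$, the field $\nabla\lambda$ belongs to $H(\mathrm{curl};\Omega)$, so the Green's formula for $H(\mathrm{curl})$ (see \cite{girault-raviart}) applies and
\begin{equation*}
(\nabla\times\bq,\nabla\lambda) = (\bq,\nabla\times\nabla\lambda) + \langle\bq\times\bn,\nabla\lambda\rangle_\Gamma = \langle\bq\times\bn,\nabla\lambda\rangle_\Gamma.
\end{equation*}
The key point is that this boundary pairing depends on $\nabla\lambda$ only through its tangential trace, which for $\lambda\in H^1(\Omega)$ is the surface gradient of $\lambda|_\Gamma$; since $\lambda\in H^1_{0c}(\Omega)$ is constant on each component $\Gamma_i$, that surface gradient vanishes on all of $\Gamma$, so $\langle\bq\times\bn,\nabla\lambda\rangle_\Gamma=0$. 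Hence $(\varepsilon\nabla\lambda,\nabla\lambda)=0$, and the uniform positive definiteness of $\varepsilon$ gives $\nabla\lambda=0$. As $\Omega$ is connected $\lambda$ is a constant, and $\lambda|_{\Gamma_0}=0$ forces that constant to be zero, so $\lambda\equiv0$.

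Substituting $\lambda\equiv0$ back into $\varepsilon\nabla\lambda+\nabla\times\bq=0$ then gives $\nabla\times\bq=0$; combined with $\nabla\cdot\bq=0$ and $\bq\cdot\bn=0$ from the first paragraph, this says exactly that $\bq\in\mathbb{H}_{n,0}(\Omega)$, i.e.\ $\bq$ is a normal harmonic field, which is the claim. I expect the one genuinely delicate step to be the rigorous handling of the boundary term $\langle\bq\times\bn,\nabla\lambda\rangle_\Gamma$ under the low-regularity hypotheses: one must work with the correct trace spaces for $H(\mathrm{curl};\Omega)$ and $H(\mathrm{div};\Omega)$, justify both Green's identities by density, and confirm the identification of the tangential trace of $\nabla\lambda$ with the surface gradient of $\lambda|_\Gamma$ so that the locally constant boundary data built into $H^1_{0c}(\Omega)$ can be used; the remaining steps are routine.
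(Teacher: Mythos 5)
Your proof follows the same route as the paper's: decouple the homogeneous dual identity by testing with $(\bv,0)$ and $(0,r)$ to obtain $\varepsilon\nabla\lambda+\nabla\times\bq=0$, $\nabla\cdot\bq=0$ and $\bq\cdot\bn=0$ on $\Gamma$, then pair the first equation with $\nabla\lambda$, kill the cross term $(\nabla\times\bq,\nabla\lambda)$, and conclude $\lambda\equiv 0$ and $\bq\in\mathbb{H}_{n,0}(\Omega)$. The only difference is that the paper asserts $(\nabla\times\bq,\nabla\lambda)=0$ as a bare fact, whereas you justify it via the $H(\mathrm{curl};\Omega)$ Green's formula together with the observation that the tangential trace of $\nabla\lambda$ is the surface gradient of the locally constant boundary values of $\lambda\in H^1_{0c}(\Omega)$ and hence vanishes --- a correct and welcome filling-in of that step.
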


\begin{proof}
Let $\lambda\in H_{0c}^1(\Omega)$ and $\bq\in H(curl;\Omega)$ be the solution of the homogeneous dual problem \eqref{EQ:div-curl-tbvp:dual-problem}. Then,
$$
(\bv, \varepsilon \nabla \lambda + \nabla \times \bq) + (\bq, \nabla r)=0,\quad \forall (\bv,r)\in [L^2(\Omega)]^3\times \bar{H}^1(\Omega).
$$
Observe that the test against $r\in \bar{H}^1(\Omega)$ ensures $\nabla\cdot \bq=0$ and $\bq\cdot \bn=0$ on $\Gamma$. It follows that
$$
\varepsilon \nabla \lambda + \nabla \times \bq = 0,\ \nabla\cdot \bq=0, \ \bq\cdot \bn=0 \ \mbox{on } \Gamma.
$$
Now testing the first equation against $\nabla\lambda$ gives
$$
(\varepsilon \nabla \lambda,\nabla\lambda) + (\nabla \times \bq,\nabla\lambda) =0,
$$
which, together with the fact that $(\nabla \times \bq,\nabla\lambda) =0$, leads to $\lambda\equiv 0$. Consequently, the vector field $\bq$ satisfies
$$
\nabla \times \bq = 0,\ \nabla\cdot \bq=0,\ \bq\cdot \bn=0 \ \mbox{on } \partial\Omega.
$$
In other words, $\bq\in\mathbb{H}_{n,0}(\Omega)$ is a harmonic field. This completes the proof of the theorem.
\end{proof}
\medskip

It is known that the dimension of the harmonic space $\mathbb{H}_{\varepsilon n,0}(\Omega)$ is the first Betti number of the domain $\Omega$. The first Betti number is the rank of the first homology group of $\Omega$. It is the number of elements of a maximal set of homologically independent non-bounding cycles in the domain. It is also the dimension of the first de Rham cohomology group of $\Omega$. The dimension of $\mathbb{H}_{\varepsilon n,0}(\Omega)$ is clearly zero if the domain $\Omega$ is simply connected.

\section{Discrete Weak Differential Operators}
The variational problems \eqref{EQ:divcurl-tbv-weakform} and \eqref{EQ:div-curl-tbvp:dual-problem} are formulated with two \textcolor{blue}{principal} differential operators: gradient and curl. This section shall introduce the notion of weak differential operators. These weak differential operators shall be discretized by using piecewise polynomials which lead to discretization schemes for the variational problems.

Let $T$ be a polyhedral domain with boundary $\partial T$. Denote by $\bm{n}$ the unit outward normal direction on $\partial T$. The space of weak functions in $T$ is defined as
\begin{equation*}
W(T) =\{v = \{v_0, v_b\} : v_0 \in L_2(T), v_b \in L_2(\partial T)\},
\end{equation*}
where $v_0$ represents the value of $v$ in the interior of $T$, and $v_b$ represents certain information of $v$ on the boundary $\partial T$.
Similarly, we define $V(T)$ the space of vector-valued weak functions in $T$ given by:
$$
V(T) =\{\bm{v} = \{\bm{v}_0, \bm{v}_b\} : \bm{v}_0 \in [L_2(T)]^3, \bm{v}_b \in [L_2(\partial T)]^3\}.
$$

\subsection{Weak gradient}
The weak gradient of $v \in W(T)$, denoted by $\nabla_w v$, is defined as a continuous linear functional in the Sobolev space $[H^1(T)]^3$ with the following action
\begin{equation*}
\functionalaction{\nabla_w v}{\bm{\varphi}}_T = -(v_0,\nabla\cdot \bm{\varphi})_T + \langle v_b, \bm{\varphi} \cdot \bn \rangle_{\partial T}, \qquad \forall \; \bm{\varphi} \in [H^1(T)]^3.
\end{equation*}
Denote by $P_r(T)$ the space of polynomials on $T$ with total degree $r$ and less.
The discrete weak gradient operator, denoted by $\nabla_{w,r,T} v$, is defined as the unique vector-valued polynomial in $[P_r(T)]^3$ satisfying
\begin{equation}\label{EQ:dis_WeakGradient}
(\nabla_{w,r,T} v, \bm{\varphi})_T = -(v_0,\nabla \cdot \bm{\varphi})_T + \langle v_b , \bm{\varphi}\cdot \bn \rangle_{\partial T},\quad \forall \; \bm{\varphi} \in [P_r(T)]^{3}.
\end{equation}
For smooth $v_0\in H^1(T)$, we have from the usual integration by parts that
\begin{equation*}
(\nabla_{w,r,T}  v, \bm{\varphi})_T = (\nabla v_0, \bm{\varphi})_T + \langle v_b- v_0, \bm{\varphi}\cdot \bn \rangle_{\partial T},\quad \forall \; \bm{\varphi} \in [P_r(T)]^{3}.
\end{equation*}

\subsection{Weak curl}
The weak curl of $\bv\in V(T)$ (see \cite{WangWang_divcurl_2016}), denoted by $\nabla_{w} \times \bv$, is
defined as a bounded linear functional in the Sobolev space $[H^1(T)]^3$ with actions given by
\begin{equation*}
\functionalaction{\nabla_w \times \bv}{\bm{\varphi}}_T = (\bv_0,\nabla \times \bvarphi)_T - \langle \bv_b \times \bn, \bvarphi\rangle_{\partial T}, \quad \forall \; \bm{\varphi} \in [H^1(T)]^3.
\end{equation*}
The discrete weak curl of $\bv \in V (T)$, denoted by $\nabla_{w,r,T}\times \bv$, is defined as the unique vector-valued polynomial in $[P_r(T)]^3$, such that
\begin{equation}\label{EQ:dis_WeakCurl}
(\nabla_{w,r,T} \times \bv, \bvarphi)_T = (\bv_0,\nabla \times \bvarphi)_T - \langle\bv_b \times \bn, \bvarphi\rangle_{\partial T}, \quad \forall \; \bm{\varphi} \in [P_r(T)]^3.
\end{equation}
For sufficiently smooth $\bv_0$ such that $\nabla \times \bv_0 \in [L_2(T)]^3$, we have from the integration by parts that
\begin{equation*}
 (\nabla_{w,r,T} \times \bv, \bvarphi)_T = (\nabla \times \bv_0, \bvarphi)_T - \langle(\bv_b - \bv_0) \times \bn, \bvarphi\rangle_{\partial T}, \quad \forall \;\bm{\varphi} \in [P_r(T)]^3.
\end{equation*}

\section{A Primal-Dual Weak Galerkin Method}
Assume that the domain $\Omega$ is of polyhedral type, and $\T_h=\{T\}$ is a finite element partition of $\Omega$ that is shape regular as described in \cite{WangWang_2016,WangYe_2013}. Denote by $h_T = \diam(T)$ the diameter of the element $T$, and $h = \max_T h_T$ the meshsize of the partition $\T_h = \{T\}$. Denote by $\E_h$ the set of all faces in $\T_h$ so that each $ \sigma\in \E_h$ is either on the boundary of $\Omega$ or shared by two elements $T_1$ and $T_2$. Denote by $\E^0_h = \E_h \textcolor{blue}{\backslash} \partial \Omega$ the set of all interior faces in $\E_h$.
Let $k \geq 0$ be a given integer. On each $T \in \T_h$, we introduce two local weak finite element spaces as follows:
\begin{eqnarray*}
&W(k,T)   &= \{  v = \{\bv_0, \bv_b\} :   v_0 \in  P_k(T),     v_b|_\sigma \in  P_k(\sigma),    \sigma\in (\partial T \cap \E_h)\},\\
&\bV(k,T) &= \{\bv = \{\bv_0, \bv_b\} : \bv_0 \in [P_k(T)]^3, \bv_b|_\sigma \in [P_k(\sigma)]^3\times\bn_\sigma, \sigma\in (\partial T \cap \E_h)\},
\end{eqnarray*}
where $\bn_\sigma$ is a unit normal vector to the face $\sigma$. Note that $\bv_b|_\sigma$ is effectively a vector-valued polynomial of degree $k$ in the tangent space of $\sigma$.
The global weak finite element space is constructed by patching all the local elements $W(k,T)$ ( or  $\bV(k,T)$) through a common value on the interior faces:
\begin{eqnarray*}
&W^k_h &= \{  v = \{v_0, v_b\} : v|_T \in \textcolor{blue}{W}(k, T ), v_b|_{\partial T_1\cap \sigma} = v_b|_{\partial T_2 \cap \sigma}, T \in \T_h, \sigma \in \E^0_h\},\\
&\bV^k_h &= \{\bv = \{\bv_0, \bv_b\} : \bv|_T \in \bV(k, T ), \bv_b|_{\partial T_1\cap \sigma} = \bv_b|_{\partial T_2 \cap \sigma}, T \in \T_h, \sigma \in \E^0_h\},
\end{eqnarray*}
where $\bv_b|_{\partial T_i \cap \sigma}$ is the value of $\bv_b$ on the face $\sigma$ as seen from the element $T_i$. A third finite element space consists of \textcolor{blue}{piecewise} vector-valued polynomials of degree $k$:
\begin{eqnarray*}
&\bm{U}^k_h &= \{\bm{u} : \bm{u} \in [L_2(\Omega)]^3, \bm{u}|_T \in [P_{k}(T)]^3, T\in \T_h\}.
\end{eqnarray*}

Next, we introduce the following finite element spaces:
\begin{equation}\label{EQ:0324:001}
\begin{split}
\bm{U}_h  &=\bm{U}^k_h,\\
M_h       &=\{s=\{s_0,s_b\} \in W^k_h:\  \ (s_0,1)=0\},\\
S_h       &=\{\lambda=\{\lambda_0,\lambda_b\} \in W^k_h:\   \lambda_b|_{\Gamma_0}=0, \ \lambda_b|_{\Gamma_i}=const, \ i=1,\ldots, L \},\\
\bV_h     &=\bV^k_h.
\end{split}
\end{equation}

For functions in $S_h$ and $M_h$, the discrete weak gradient is defined by using \eqref{EQ:dis_WeakGradient} with $r=k$ on each element $T$. Likewise, the discrete weak curl is defined for functions in $\bV_h$ by using \eqref{EQ:dis_WeakCurl} with $r=k$; i.e.,
\begin{eqnarray*}
&&(\nabla_{w,k}  v)|_T =\nabla_{w,k,T} (v|_T ),  \quad v \in W^k_h,\\
&&(\nabla_{w,k} \times \bv)|_T =\nabla_{w,k,T} \times (\bv|_T ), \quad \bv \in \bV^k_h.
\end{eqnarray*}
Note that the weak gradient and the weak curl operators $\nabla_{w,k}$ and $\nabla_{w,k}\times$ are defined by using the same degree of polynomials as the function themselves on each element, as opposed to using polynomials of lower degree in \cite{WangWang_2016}. For simplicity of notation, we shall drop the subscript $k$ from the notations $\nabla_{w,k}$ and $\nabla_{w,k}\times$ in the rest of the paper.

Introduce an approximate bilinear form as follows:
$$
B_h(\bv, r; \varphi,\bpsi): = (\bv, \varepsilon \nabla_w \varphi + \nabla_w \times \bpsi) + (\bpsi_0, \nabla_w r)
$$
for $\bv\in \bm{U}_h,\ r\in M_h,\ \varphi\in S_h,\ \bpsi\in \bV_h$.

\medskip

\begin{algorithm}[PDWG for the div-curl system with tangential BV]
Find $\bu_h, s_h  \in \bm{U}_h\times M_h$ and $\lambda_h, \bq_h \in S_h\times \bV_h$ such that
\begin{equation}\label{EQ:PDWG-3d:01:tangential}
\left\{
\begin{array}{rl}
\s_1(\lambda_h, \bq_h;\varphi,\bpsi) + B_h(\bu_h, s_h; \varphi,\bpsi)&= G(\varphi,\bpsi),\\
-\s_2(s_h,r)+B_h(\bv, r; \lambda_h, \bq_h) & = 0,
\end{array}
\right.
\end{equation}
for all $(\bv, r) \in \bm{U}_h\times M_h$ and
$\varphi, \bpsi \in S_h\times \bV_h$, where

\begin{eqnarray}
\s_1(\lambda_h, \bq_h;\varphi, \bm{\psi})&=&\rho_1\sum_{T}h_T^{-1} \langle \lambda_0 -\lambda_b, \varphi_0 - \varphi_b\rangle_{\partial T} \\
                                 &  + &\rho_2\sum_{T}h_T^{-1} \langle \bm{q}_0 \times \bm{n} -\bm{q}_b \times \bm{n}, \bm{\psi}_0 \times \bm{n} -\bm{\psi}_b \times \bm{n}\rangle_{\partial T}, \nonumber\\
\s_2(s_h,r) &=& \rho_3\sum_{T}h_T^{-1} \langle s_0-s_b,r_0-r_b  \rangle_{\partial T},\\
G(\varphi,\bpsi)&=& (\bm{g},\bpsi_0)+\langle \bm{\chi},\bpsi_b\rangle-(f,\varphi_0) +\sum_{i=1}^L\alpha_i \varphi|_{\Gamma_i}.\label{EQ:0324:300}
\end{eqnarray}
Here $\rho_i>0$ are parameters with prescribed values at user's discretion. The default value for these parameters is $\rho_i=1$.
\end{algorithm}

\section{Element Stiffness Matrix and Load Vector}
In this section, we shall present a formula for the computation of the element stiffness matrix and the element load vector on general 3D polyhedral elements as illustrated in Fig. \ref{Elas_Poly} for the PDWG finite element scheme \eqref{EQ:PDWG-3d:01:tangential} with the lowest order element (i.e., $k=0$), extensions to higher order elements are straightforward.

Let $T\in \T_h$ be a polyhedral element with $N$ lateral faces; i.e., $\partial T = \bigcup_{i=1}^N \sigma_i$. The finite elements for the primal variable $\bu_h$ and the Lagrangian multipliers $ s_h, \lambda_h, \bq_h$ on $T$ are given respectively as follows:
\begin{eqnarray*}
&&\bu_h|_{T} \in [P_0(T)]^3, \\
&&s_h|_{T}=\{s_0, s_b\} \in \{P_0(T)\textcolor{blue}{,\;} P_0(\partial T)\},\\
&&\lambda_h |_{T}= \{\lambda_0 ,\lambda_b\} \in \{P_0(T)\textcolor{blue}{,\;} P_0(\partial T)\}, \\
&&\bq_h|_{T} = \{\bq_0, \bq_b\} \in \{[P_0(T)]^3\textcolor{blue}{,\;} [P_0(\partial T)]^3\}.
\end{eqnarray*}

\begin{wrapfigure}{r}{0.425\textwidth}
\begin{center}
\includegraphics [width=0.425\textwidth]{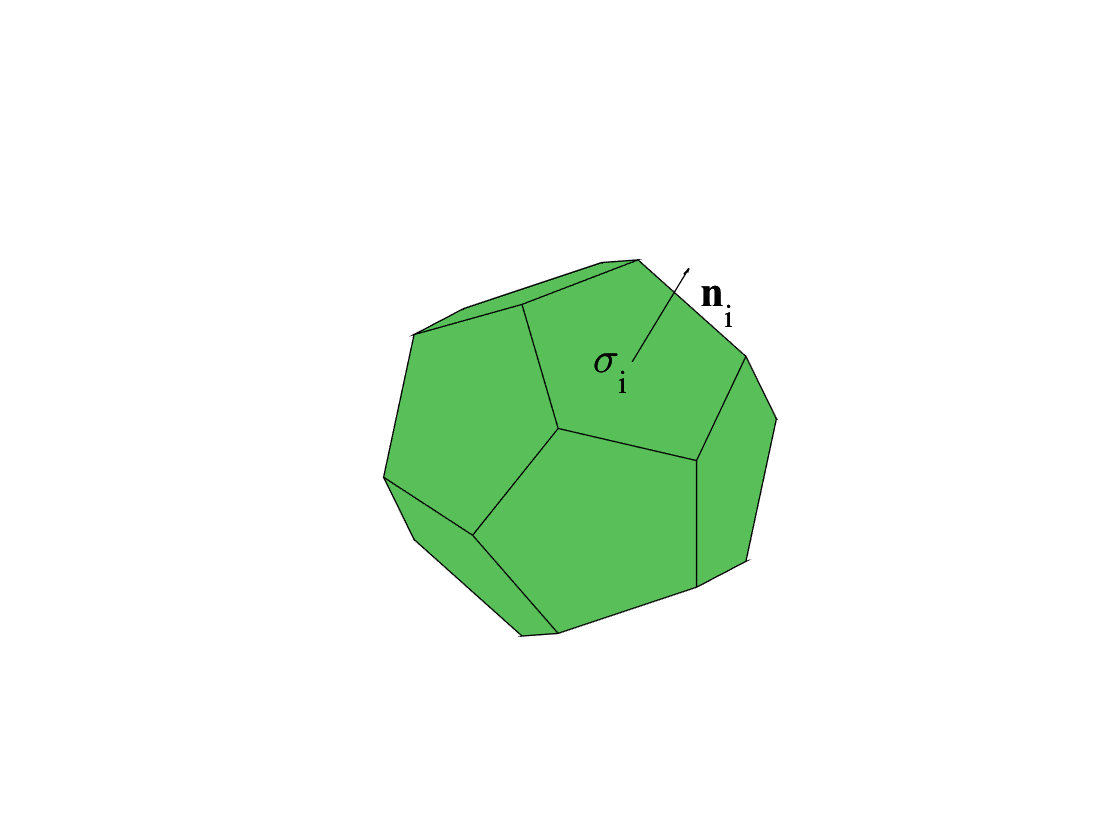}
\end{center}
\caption{3D polyhedron element.}\label{Elas_Poly}
\end{wrapfigure}

We use the following representation for each of them:
\begin{eqnarray*}
\bu_h&=&u_1 \bm{e}^1 + u_2 \bm{e}^2 + u_3 \bm{e}^3, \\
s_0&=&s_0\cdot 1, \ s_b|_{\sigma_i} =s_{b,i},\ i=1,\cdots, N,\\
\lambda_0&=&\lambda_0\cdot 1, \ \lambda_b|_{\sigma_i} =\lambda_{b,i},\ i=1,\cdots, N,\\
\bq_0 &=& q_1\bm{e}^1 + q_2 \bm{e}^2 + q_3 \bm{e}^3,\
\bq_b=\displaystyle\sum_{i=1}^N \sum_{k=1}^2 q_{b,i}^k \bm{e}^k_{b,i}.
\end{eqnarray*}
Note that $\bq_b$ is in fact a vector in the tangent plane of $\sigma_i$, and thus has 2 Dofs.
The $P_0$ vector basis $\bm{e}^k$ on $T$ are as follows:
\begin{equation}
\bm{e}^1
=
\begin{bmatrix}
1 \\
0 \\
0 \\
\end{bmatrix}_T,
\bm{e}^2
=
\begin{bmatrix}
0 \\
1 \\
0 \\
\end{bmatrix}_T,
\bm{e}^3
=
\begin{bmatrix}
0 \\
0 \\
1 \\
\end{bmatrix}_T.
\end{equation}
The tangential basis $\{\bm{e}_{b,i}^1, \bm{e}_{b,i}^2\}$ on face $\sigma_i$ is computed as follows. First we fix a unit normal direction $\tilde{\bn}_i$ to $\sigma_i$, then take an arbitrary vector $\bm{r}$ that is not normal to $\sigma_i$, and compute
\begin{eqnarray*}
\bm{v}_1 =\bm{r} \times \tilde{\bn}_i,\quad
\bm{v}_2 =(\bm{r} \times \tilde{\bn}_i)\times\tilde{\bn}_i.~~~~~~~~~~~~~~~~~~~~~~~~~~~~~~~
\end{eqnarray*}
The tangential basis $\{\bm{e}_{b,i}^1, \bm{e}_{b,i}^2\}$ is chosen as the nomalizations:
\begin{eqnarray*}
\bm{e}_{b,i}^1 = \frac{\bm{v}_1}{|\bm{v}_1|},\; \; \bm{e}_{b,i}^2 = \frac{\bm{v}_2}{|\bm{v}_2|}.
\end{eqnarray*}

Denote by
\begin{eqnarray*}
&&\{u^k\}_{k=1,2,3},\\
&&s_0, \ \{s_{b,i}\}_{i=1,...,N}, \\
&& \lambda_0, \ \{\lambda_{b,i}\}_{i=1,...,N},\\
&& \{{q}^k\}_{k=1,2,3}, \ \{{q}^k_{b,i}\}_{k=1,2,i=1,...,N}
\end{eqnarray*}
 the degree of freedoms on element $T$ for the corresponding variables. For the numerical scheme  \eqref{EQ:PDWG-3d:01:tangential}, we have the following formula for the element stiffness matrix and the load vector:

\begin{theorem}\label{THM:ESM}
The element stiffness matrix and the element load vector for the PDWG scheme \eqref{EQ:PDWG-3d:01:tangential} are given in a block matrix form as follows:
\begin{eqnarray}\label{EQ:ESM:01}
\left[
\begin{array}{c|c|c|c|c|c|c}
 \cellcolor{red!45}A  &\cellcolor{red!25}B&&&&&\\ \hline
 \cellcolor{red!25}B^T&\cellcolor{red!05}C&&&&&\cellcolor{violet!15}D \\ \hline
 &&\cellcolor{green!45}E&\cellcolor{green!25}F&&G\cellcolor{orange!15}&\\ \hline
 &&\cellcolor{green!25}F^T&\cellcolor{green!05}H&&&I\cellcolor{blue!15} \\ \hline
 &&&&J\cellcolor{yellow!45}&K\cellcolor{yellow!25}&\\ \hline
 &&G^T\cellcolor{orange!15}&&\cellcolor{yellow!25}K^T&L\cellcolor{yellow!05}& \\ \hline
 &D^T\cellcolor{violet!15}&&\cellcolor{blue!15}I^T&&&\\
\end{array}
\right]
\left[
\begin{array}{l}
\lambda_0\\
\lambda_{b,i} \\
{q}^k \\
{q}^k_{b,i}\\
s_0\\
s_{b,i}\\
{u}^k\\
\end{array}
\right]
\cong
\left[
\begin{array}{l}
-\int_{T}f \\
~~\sum_{l=1}^L\alpha_l 1_{\Gamma_l \cap \sigma_i}\\
~~\int_T \bm{g}^j \\
~~\langle \bm{\chi}, \bm{e}_{b,i}^j\rangle_{\sigma_i}\\
~~0\\
~~0\\
~~0\\
\end{array}
\right]
\end{eqnarray}
where the block components in \eqref{EQ:ESM:01} are given explicitly as follows when $\rho_i=1$:\\
\begin{eqnarray*}
&&A=\{a\}_{1\times1}, \;a = \displaystyle h_T^{-1} \displaystyle \sum_{i=1}^N |\sigma_i|,\;
B=\{b_{i}\}_{1\times N}, \;b_{i}= -h_T^{-1} |\sigma_i| ,\\
&&C=\{c_{i,j}\}_{N\times N}, \;C =\diag(B),\;
D=\{d_{i,j}\}_{N\times d}, \;d_{i,j}= \bm{e}^j\cdot(\varepsilon \bn_i)|\sigma_i|,\\
&&E=\{x_{k,j}\}_{d\times d}, \;x_{k,j}=\displaystyle h_T^{-1}\sum_{i=1}^N (\bm{e}^k\times \bn_i)\cdot (\bm{e}^j\times \bn_i)|\sigma_i|,\\
&&F=\{f_{i^k, j}\}_{2N\times d}, \;f_{i^k,j} =-h_T^{-1}\bm{e}^k_{bn,i}\cdot (\bm{e}^j\times \bn_i)|\sigma_i|,\;
  G=\{g_{j,i}\}_{d\times N}, \;g_{j,i} = \bm{e}^j\cdot\bn_i |\sigma_i|,\\
&&I =\{x_{i^k,j}\}_{2N \times d},\; x_{i^k,j} = \bm{e}^j\cdot\bm{e}_{bn,i}^k|\sigma_i|,\;
J =-A,\;
K = -B,\;
L =-C, \\
&&H= \begin{bmatrix}
     H_{11}&H_{12}\\
     H_{21}&H_{22}
     \end{bmatrix}_{2N \times 2N},
\end{eqnarray*}
where $H_{kj}, \ {k,j=1,2,}$ are diagonal matrices of size $N\times N$ given as follows:
\begin{eqnarray*}
&&     H_{kj}=\diag (V^{kj}),\; V^{kj}=\{x^{kj}_i\}_{1\times N},\; x^{kj}_i=h_T^{-1}\bm{e}^k_{bn,i}\cdot \bm{e}^j_{bn,i}|\sigma_i|.
\end{eqnarray*}
Here $\bm{e}_{bn,i}^k =\bm{e}^k_{b,i}\times \bn_i$, ${k=1,2}$, $\bn_i$ is the unit outward normal vector of $\sigma_i$.
$d=3$ is the space dimension. Please note the difference between $\bn_i$ and $\tilde{\bn}_i$ in their directions; $\bn_i$ is outward normal to $\sigma_i$ and $\tilde{\bn}_i$ is a prescribed orientation of $\sigma_i$.
\end{theorem}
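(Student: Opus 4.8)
The statement is a direct, purely computational result; no abstract argument is involved. Since the global stiffness matrix and load vector of \eqref{EQ:PDWG-3d:01:tangential} are assembled element by element in the standard way, it suffices to work on a fixed polyhedron $T$ with faces $\sigma_1,\dots,\sigma_N$ and to compute the local contribution of each of the forms $B_h$, $\s_1$, $\s_2$ and $G$ to the linear system, expressed in the local degrees of freedom $\{u^k\}$, $s_0$, $\{s_{b,i}\}$, $\lambda_0$, $\{\lambda_{b,i}\}$, $\{q^k\}$, $\{q^k_{b,i}\}$. The plan has three steps: (i) evaluate the discrete weak gradient and weak curl on the lowest-order ($k=0$) weak finite element spaces; (ii) substitute the basis expansions of $\bu_h$, $s_h$, $\lambda_h$, $\bq_h$ into each bilinear term and reduce every integral to an elementary surface quantity; (iii) test against each local basis function and collect the coefficients into the block form \eqref{EQ:ESM:01}.

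For step (i), let $v=\{v_0,v_b\}$ with $v_0\in P_0(T)$ and $v_b|_{\sigma_i}\equiv v_{b,i}$, and take $\bvarphi\in[P_0(T)]^3$ in the defining identity \eqref{EQ:dis_WeakGradient}; since $\nabla\cdot\bvarphi=0$ the volume term vanishes, and comparing with $(\nabla_{w,0,T}v,\bvarphi)_T=|T|\,(\nabla_{w,0,T}v)\cdot\bvarphi$ gives the constant vector
\[
\nabla_{w,0,T}v=\frac{1}{|T|}\sum_{i=1}^N v_{b,i}\,|\sigma_i|\,\bn_i ,
\]
which depends on $v_b$ alone. Likewise, using \eqref{EQ:dis_WeakCurl} with $\nabla\times\bvarphi=0$, for $\bv=\{\bv_0,\bv_b\}\in\bV(0,T)$ one gets
\[
\nabla_{w,0,T}\times\bv=-\frac{1}{|T|}\sum_{i=1}^N(\bv_{b,i}\times\bn_i)\,|\sigma_i| ,
\]
which depends on $\bv_b$ alone. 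These two identities already fix the sparsity of \eqref{EQ:ESM:01}: in $B_h(\bu_h,s_h;\varphi,\bpsi)=(\bu_h,\varepsilon\nabla_w\varphi)+(\bu_h,\nabla_w\times\bpsi)+(\bpsi_0,\nabla_w s_h)$ the first term couples $\bu_h$ only to $\varphi_b$, the second only to $\bpsi_b$, and the third couples $\bpsi_0$ only to $s_b$; similarly $(\bq_0,\nabla_w r)$ couples $\bq_0$ only to $r_b$, while $\s_1$ and $\s_2$ are face-local sums. Matching these dependencies against the seven groups of test functions $\varphi_0$, $\varphi_{b,i}$, $\bpsi_0$, $\bpsi_{b,i}$, $r_0$, $r_{b,i}$, $\bv$ produces exactly the nonzero blocks $A,B,C,D,E,F,G,H,I,J,K,L$ together with the transposes $B^T,D^T,F^T,G^T,I^T,K^T$, and zeros elsewhere; here $J=-A$, $K=-B$, $L=-C$ originate from $-\s_2$, and $D^T,G^T,I^T$ from the adjoint occurrence of $B_h$ in the second equation of \eqref{EQ:PDWG-3d:01:tangential}.

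For steps (ii) and (iii), substitute $\bu_h=\sum_k u^k\bm{e}^k$, $\bq_0=\sum_k q^k\bm{e}^k$, $\bq_b|_{\sigma_i}=\sum_{k=1}^{2} q_{b,i}^k\bm{e}^k_{b,i}$, the constants $s_0,\lambda_0$, and $s_b|_{\sigma_i}=s_{b,i}$, $\lambda_b|_{\sigma_i}=\lambda_{b,i}$ into every term and evaluate. Each volume integrand is constant and each face integrand is piecewise constant, so the integrals reduce to products with $|T|$ and $|\sigma_i|$, and the factor $|T|^{-1}$ carried by each weak operator cancels the corresponding $|T|$, leaving only surface quantities. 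For instance $(\bu_h,\varepsilon\nabla_w\varphi)_T=\sum_{i,k} u^k\varphi_{b,i}\,|\sigma_i|\,\bm{e}^k\cdot(\varepsilon\bn_i)$ once $\varepsilon$ is replaced by its $T$-average $|T|^{-1}\int_T\varepsilon$, which is the block $D$ (and, from the adjoint term, $D^T$). The tangential contributions in $\s_1$ are treated by expanding, on each $\sigma_i$, the quantities $\bq_0\times\bn_i-\bq_b\times\bn_i$ and $\bpsi_0\times\bn_i-\bpsi_b\times\bn_i$ in the fixed frame with the aid of $\bm{e}^k_{bn,i}=\bm{e}^k_{b,i}\times\bn_i$, whereupon the inner products collapse to the scalars $(\bm{e}^k\times\bn_i)\cdot(\bm{e}^j\times\bn_i)$, $\bm{e}^k_{bn,i}\cdot(\bm{e}^j\times\bn_i)$ and $\bm{e}^k_{bn,i}\cdot\bm{e}^j_{bn,i}$ appearing in $E$, $F$ and the $2\times2$ block $H$. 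Testing the first equation of \eqref{EQ:PDWG-3d:01:tangential} against $\varphi_0,\varphi_{b,i},\bpsi_0,\bpsi_{b,i}$ and the second against $r_0,r_{b,i}$ and $\bv=\bm{e}^k$ then yields the seven block rows of \eqref{EQ:ESM:01}; substituting the same expansions in $G(\varphi,\bpsi)=(\bm{g},\bpsi_0)+\langle\bm{\chi},\bpsi_b\rangle-(f,\varphi_0)+\sum_i\alpha_i\varphi|_{\Gamma_i}$ gives the right-hand side, where $\langle\bm{\chi},\bpsi_b\rangle$ contributes $\langle\bm{\chi},\bm{e}^j_{b,i}\rangle_{\sigma_i}$ since $\bpsi_b$ lies in the tangent plane of $\sigma_i$, and $\sum_i\alpha_i\varphi|_{\Gamma_i}$ contributes $\sum_l\alpha_l\,1_{\Gamma_l\cap\sigma_i}$ since $\varphi_b$ is constant on each boundary component $\Gamma_l$.

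I expect no genuine mathematical obstacle: restricted to $k=0$ the scheme is finite-dimensional and every integral is elementary. The part that requires real care is the bookkeeping---keeping the row and column orderings consistent throughout, propagating signs correctly through the two Green-type identities that define $\nabla_w$ and $\nabla_w\times$ and through the $\pm$ signs preceding $\s_1$ and $\s_2$ in \eqref{EQ:PDWG-3d:01:tangential}, and carefully distinguishing the geometric outward normal $\bn_i$ of $\sigma_i$ from the prescribed orientation $\tilde{\bn}_i$ used to build the tangential basis $\{\bm{e}^1_{b,i},\bm{e}^2_{b,i}\}$ (an inconsistent choice there would flip signs in the off-diagonal blocks $F$ and $I$ while leaving the symmetric blocks untouched). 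Carrying out steps (i)--(iii) and collecting the contributions then gives the block stiffness matrix and load vector exactly as stated.
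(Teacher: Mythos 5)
Your proposal is correct and follows essentially the same route as the paper: compute the lowest-order discrete weak gradient and weak curl on the basis functions (obtaining $\nabla_w\{0,1_{b,i}\}=|\sigma_i|\bn_i/|T|$ and $\nabla_w\times\{\bm 0,\bm e^k_{b,i}\}=-\bm e^k_{bn,i}|\sigma_i|/|T|$ exactly as the authors do), then test the scheme against each local basis function and read off the blocks. The only cosmetic difference is that you make explicit the sparsity argument and the $T$-averaging of $\varepsilon$, which the paper leaves implicit.
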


\begin{proof}
From the definition of the weak gradient \eqref{EQ:dis_WeakGradient}, we have
\begin{eqnarray*}
&&(\nabla_w\{1,0\}, \bm{\phi}) =-(1,{\nabla \cdot \bm{\phi}}) +\langle 0  ,\bm{\phi}\cdot \bn\rangle_{\partial T},\\
&&(\nabla_w\{0,1_{b,i}\}, \bm{\phi})=  \langle 1_{b,i} ,\bm{\phi} \cdot \bn \rangle_{\partial T}, \\
&&(\nabla_w s_h, \bm{\phi})  = -(s_0,{\nabla \cdot \bm{\phi}}) + \langle s_b, \bm{\phi}\cdot\bn\rangle_{\partial T}, \;\forall\; \bm{\phi} \in [P_0]^d.
\end{eqnarray*}
Since $\bm{\phi}\in  [P_0]^d$, then we have $\nabla \cdot \bm{\phi}=0$ and
\begin{eqnarray*}
&&\nabla_w\{1,0\} = \bm{0},\\
&&\nabla_w\{0,1_{b,i}\} =|\sigma_i|\bn_i /|T|,\\
&&\nabla_w s_h = \sum_{i=1}^N s_{b,i}|\sigma_i|\bn_i /|T|,\;\nabla_w \lambda_h = \sum_{i=1}^N \lambda_{b,i}|\sigma_i|\bn_i /|T|.
\end{eqnarray*}
Similarly, by using the definition of the weak curl \eqref{EQ:dis_WeakCurl}, we have
\begin{eqnarray*}
&&\nabla_w  \times \{\bm{e}^k,\bm{0}\}= \bm{0},\\
&&\nabla_w  \times \{\bm{0},\bm{e}^k_{b,i} \}= - \bm{e}^k_{b,i} \times \bn_i |\sigma_i|/|T| = -\bm{e}^k_{bn,i} |\sigma_i|/|T| ,\\
&&\nabla_w  \times \bm{q}_h = -\sum_{k=1}^2\sum_{i=1}^N\bq_{b,i}^k \bm{e}^k_{bn,i} |\sigma_i|/|T|.
\end{eqnarray*}

To derive a formula for the element stiffness matrix and the load vector, we may consider \eqref{EQ:PDWG-3d:01:tangential} with a finite element partition consisting of only one element $T$. By testing this equation with test functions $\bv =\bm{e}^j$, $r =\{1,0\},\;\{0, 1_{b,i}\}$, $\varphi =\{1,0\}$, $\{0, 1_{b,i}\}$, $\bpsi =\{\bm{e}^j,0\}$, $\{0, \bm{e}^j_{b,i}\}$, we easily arrive at the following discrete equations:
\begin{equation}\label{EQ:PDWG-3d:dis}
\left\{
\begin{array}{rl}
&\displaystyle h_T^{-1}\sum_{i=1}^N (\lambda_0 - \lambda_{b,i})|\sigma_i| =-(f,1),\\
&\displaystyle h_T^{-1}(\lambda_0 - \lambda_{b,i})|\sigma_i| + (\sum_{k=1}^d u_k \bm{e}^k,\varepsilon\bm{n}|\sigma_i|/|T|) = \sum_{l=1}^L\alpha_l 1_{\Gamma_l \cap \sigma_i},\\
&\displaystyle h_T^{-1} \sum_{i=1}^N \sum_{k=1}^d  q_k (\bm{e}^k \times \bm{n}_i) \cdot (\bm{e}^j\times \bm{n}_i) |\sigma_i| - h_T^{-1}\sum_{i=1}^N \sum_{k=1}^2 q^k_{b,i}\bm{e}_{bn,i}^k\cdot(\bm{e}^j\times \bm{n}_i)|\sigma_i|\\
&~~~~~~~~~~~~~~~~~~~~~~~~~~~~~~~~~~~~~~~~~~~~~~~~~~~~~~~~~~~~+
\displaystyle\sum_{i=1}^N s_{b,i}|\sigma_i| \bm{e}^j\cdot \bn_i
=
\int_T \bm{g}^j,\\
&\displaystyle -h_T^{-1} \sum_{k=1}^d  q_k (\bm{e}^k \times \bm{n}_i)\cdot \bm{e}^j_{bn,i}|\sigma_i| + h_T^{-1} \sum_{k=1}^2 q^k_{b,i}\bm{e}_{bn,i}^k\cdot\bm{e}_{bn,i}^j|\sigma_i|\\
&~~~~~~~~~~~~~~~~~~~~~~~~~~~~~~~~
-\displaystyle\sum_{k=1}^d u_k \bm{e}^k \cdot \bm{e}^j_{bn,i} |\sigma_i|
=\langle \bm{\chi}, \bm{e}_{b,i}^j\rangle_{\sigma_i},\\
&\displaystyle -h_T^{-1} \sum_{i=1}^N(s_0 - s_{b,i})|\sigma_i|=0,\\
&\displaystyle h_T^{-1}(s_0-s_{b,i})|\sigma_i| +\sum_{k=1}^d q^k \bm{e}^k \cdot \bn_i |\sigma_i|=0,\\
&\displaystyle \sum_{i=1}^N\lambda_{b,i} \bm{e}^j \cdot \varepsilon \bn_i |\sigma_i|
-\sum_{k=1}^2\sum_{i=1}^N q_{b,i}^k \bm{e}^j \cdot \bm{e}^k_{bn,i}|\sigma_i| =0.
\end{array}
\right.
\end{equation}
A matrix version for the above discrete equations gives rise to the formula \eqref{EQ:ESM:01}.
\end{proof}

\begin{remark}
It is not hard to see that the element stiffness matrix is of size $2+2N+2*d+N*(d-1)$.
Therefore, a cubic element would have $32$ dofs in total and a tetrahedral element has $24$ dofs.
In general, for a finite element partition of $N_T$ elements with $N_\sigma$ faces for each element,
the corresponding linear system has dofs no more than $2N_T+2N_\sigma+2N_T*d + N_\sigma*(d-1)$.
While the scheme \eqref{EQ:PDWG-3d:01:tangential} appears to have a lot dofs with piecewise constant approximations, the element stiffness matrix is in fact quite easy to compute. This numerical scheme can be further simplified through condensation or hybridization techniques for fast and parallel computing, which will be addressed in forthcoming papers.
\end{remark}

\section{Solution Existence and Uniqueness}
In this section we show that the PDWG scheme \eqref{EQ:PDWG-3d:01:tangential} has solutions, and the solution is unique for the component $\bu_h$.
\textcolor{blue}{Denote by $Q_0$ the $L^2$ projection operator onto $P_k(T)$. On each face $\sigma \in \partial T$, we use $Q_b$ to denote the $L^2$ projection operator onto $P_k(\sigma)$. Denote by $Q_h$ the projection operator onto the weak finite element space $W(k,T)$ such that
$$
(Q_h w)|_T = \{Q_0 w|_T,Q_b w|_{\pT}\}.
$$
Analogously, we use $\bbQ_0$, $\bbQ_b$ and  $\bbQ_h$ to denote the $L^2$ projection operators onto the vector-valued finite element spaces $[P_k(T)]^3$, $[P_k(\sigma)]^3$, and $\bV(k,T)$, respectively.}

\begin{theorem}\label{THM:ExistenceUniqueness-TBVP-PDWG}
For the finite element spaces $\bm{U}_h, M_h, S_h, \bV_h$ constructed in \eqref{EQ:0324:001},  the solution $(u_h,s_h,\lambda_h,\bm{q}_h)$ of the primal-dual weak Galerkin finite element scheme \eqref{EQ:PDWG-3d:01:tangential} is unique for all the components except $\bm{q}_h$.  The solution for $\bm{q}_h$ is unique up to a continuous piecewise $[P_k]^3$ harmonic field in $\HnHarmonic$.
\end{theorem}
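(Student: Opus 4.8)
The plan is to use the fact that \eqref{EQ:PDWG-3d:01:tangential} is a square linear system whose stiffness matrix is symmetric (up to the sign in front of $\s_2$, since $\s_1,\s_2$ are symmetric and the two copies of $B_h$ are mutual transposes). It is therefore enough (a) to describe the solution set of the homogeneous scheme ($f=0$, $\bm{g}=0$, $\bm{\chi}=0$, $\alpha_i=0$), and (b) to observe that the load $G$ of the actual scheme is orthogonal to that null space. Point (b) will follow because, as we shall see, the null space consists of $(\bu_h,s_h,\lambda_h)=(0,0,0)$ together with a $\bq_h$ whose interior part $\bq_{h,0}$ lies in $\HnHarmonic$ and whose boundary part is the tangential trace of $\bq_{h,0}$; for such $\bpsi=\bq_h$ one has $G(0,\bpsi)=(\bm{g},\bpsi_0)+\langle\bm{\chi},\bpsi_b\rangle=(\bm{g},\bq_{h,0})+\langle\bm{\chi}\times\bn,\bq_{h,0}\times\bn\rangle$ (using \eqref{CompatibilityC:02}), which vanishes by the compatibility condition \eqref{EQ:divcurl-compatibility:03}. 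For (a) I would first extract conformity from an energy identity, then treat the two (by then decoupled) equations in turn.

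\emph{Energy step.} Testing the first homogeneous equation with $(\varphi,\bpsi)=(\lambda_h,\bq_h)$ and the second with $(\bv,r)=(\bu_h,s_h)$ and subtracting gives $\s_1(\lambda_h,\bq_h;\lambda_h,\bq_h)+\s_2(s_h,s_h)=0$. Since both stabilizers are sums of nonnegative face terms, $\lambda_0=\lambda_b$, $s_0=s_b$ and $\bq_0\times\bn=\bq_b\times\bn$ on $\partial T$ for every $T$; hence $\lambda_h\in H^1_{0c}(\Omega)$, $s_h\in\bar{H}^1(\Omega)$, $\bq_{h,0}\in H(curl;\Omega)$ with $\bq_{h,b}$ its tangential trace, the discrete weak operators coincide with the classical ones on these functions, and $\s_1,\s_2$ vanish identically. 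The homogeneous system then decouples into the \emph{primal} equation $(\bu_h,\varepsilon\nabla_w\varphi+\nabla_w\times\bpsi)+(\bpsi_0,\nabla s_0)=0$ for all $(\varphi,\bpsi)\in S_h\times\bV_h$, and the \emph{dual} equation $(\bv,\varepsilon\nabla\lambda_0+\nabla\times\bq_{h,0})+(\bq_{h,0},\nabla_w r)=0$ for all $(\bv,r)\in\bm{U}_h\times M_h$.

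For the dual equation I would take $\bv=0$ and vary $r$: expanding $(\bq_{h,0},\nabla_w r)$ with \eqref{EQ:dis_WeakGradient} shows $\bq_{h,0}$ has single-valued normal trace, $\bq_{h,0}\cdot\bn=0$ on $\Gamma$, and that its element-wise divergence is $L^2$-orthogonal to all mean-zero piecewise polynomials, which with $\int_\Omega\nabla\!\cdot\!\bq_{h,0}=\int_\Gamma\bq_{h,0}\cdot\bn=0$ gives $\nabla\!\cdot\!\bq_{h,0}=0$. Next, with $r=0$, take $\bv=\nabla\lambda_0\in\bm{U}_h$: an element-wise Green's identity kills the boundary term ($\lambda_0$ is constant on each $\Gamma_i$ and $\bq_{h,0}\times\bn$ is single-valued), giving $(\nabla\times\bq_{h,0},\nabla\lambda_0)=0$, hence $(\varepsilon\nabla\lambda_0,\nabla\lambda_0)=0$ and $\lambda_h=0$; then $\bv=\nabla\times\bq_{h,0}\in\bm{U}_h$ forces $\nabla\times\bq_{h,0}=0$. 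So $\bq_{h,0}\in\HnHarmonic$, and being piecewise $[P_k]^3$ with both tangential and normal traces continuous across faces it is globally continuous, with $\bq_{h,b}$ its tangential trace --- exactly the asserted freedom in $\bq_h$. For the primal equation I would take $\bpsi_b=0$, $\bpsi_0$ arbitrary to get $\nabla_h\times\bu_h+\nabla s_0=0$ (this field lies in $\bm{U}_h$ and is $L^2$-orthogonal to it, $\nabla_h\times$ denoting the element-wise curl); then $\bpsi_0=0$, $\bpsi_b$ arbitrary to get that $\bu_h\times\bn$ is single-valued and vanishes on $\Gamma$, so $\bu_h\in H(curl;\Omega)$ and $\nabla_h\times\bu_h=\nabla\times\bu_h$. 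An element-wise Green's identity, using $\bu_h\times\bn=0$ on $\Gamma$ and the continuity of $s_0$, then gives $\|\nabla s_0\|^2=-(\nabla\times\bu_h,\nabla s_0)=0$, so $s_0$ is constant, hence $s_h=0$ by the zero-mean constraint in $M_h$, and $\nabla\times\bu_h=0$. Finally, with $\bpsi=0$, $\bu_h$ is curl-free with vanishing tangential trace, hence $\bu_h=\nabla\Phi$ for a globally continuous piecewise-$P_{k+1}$ scalar $\Phi$ constant on each $\Gamma_i$ (obtained by gluing the element-wise polynomial potentials); the weak function $\varphi=\{Q_k\Phi,Q_k^{\partial}(\Phi|_{\partial T})\}$, with $Q_k,Q_k^{\partial}$ the $L^2$-projections onto $P_k$ on elements and faces, lies in $S_h$, the projection errors are annihilated in \eqref{EQ:dis_WeakGradient}, so $\nabla_w\varphi=\nabla\Phi=\bu_h$, and $(\bu_h,\varepsilon\nabla_w\varphi)=(\varepsilon\bu_h,\bu_h)=0$ forces $\bu_h=0$.

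The step I expect to be the main obstacle is this last one, $\bu_h=0$: it needs the Hodge-type identification of curl-free fields with zero tangential trace as gradients of scalars constant on each boundary component --- carried out at the piecewise-polynomial level, so that $\Phi$ is a continuous piecewise $P_{k+1}$ function --- together with the commuting relation $\nabla_w\{Q_k\Phi,Q_k^{\partial}(\Phi|_{\partial T})\}=\nabla\Phi$ for $\Phi\in P_{k+1}$, which is precisely what lets a degree-$k$ discrete weak gradient reproduce the degree-$k$ field $\bu_h$ exactly. Everything else is the routine ``integrate the weak operators against piecewise polynomials and localize over $\T_h$'' bookkeeping; note also that the discrete harmonic space is $\{0\}$ whenever no nonzero field of $\HnHarmonic$ is piecewise $P_k$ (generically the case), in which situation $\bq_h$ is unique as well.
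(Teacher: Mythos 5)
Your energy step and your treatment of the dual equation coincide with the paper's argument; in fact you are slightly more careful at one point, since you test with $\bv=\nabla\lambda_0$ and then $\bv=\nabla\times\bq_0$ rather than asserting, as the paper does in \eqref{EQ:0324:200}, that $(\bv,\varepsilon\nabla\lambda_0+\nabla\times\bq_0)=0$ for all $\bv\in\bm{U}_h$ forces the field itself to vanish (for merely $L^\infty$ coefficients $\varepsilon$ that field need not lie in $\bm{U}_h$). The genuine divergence is in how you kill $\bu_h$. The paper invokes the continuous Helmholtz decomposition of Theorem \ref{THM:helmholtz-2:02}, writes $\bu_h=\nabla\tilde\varphi+\varepsilon^{-1}\nabla\times\tilde\bpsi$, and tests with $\varphi=Q_h\tilde\varphi$, $\bpsi=Q_h\tilde\bpsi$, relying on $\bu_h=\nabla_w\varphi+\varepsilon^{-1}\nabla_w\times\bpsi$; you instead use the discrete test functions to show first that $\bu_h$ is globally curl-free with vanishing tangential trace (extracting $s_h=0$ along the way from $\nabla\times\bu_h=-\nabla s_0$ and the Green identity), and only then represent $\bu_h$ exactly as $\nabla\Phi$ with $\Phi$ a continuous piecewise $P_{k+1}$ potential, so that $\nabla_w Q_h\Phi=\nabla\Phi=\bu_h$ holds with no projection loss. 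Each route buys something: the paper's avoids all topology at the discrete level, while yours makes the final identity $(\bu_h,\varepsilon\nabla_w\varphi)=(\varepsilon\bu_h,\bu_h)$ exact rather than exact-up-to-$L^2$-projections (note that $\nabla_w Q_h\tilde\varphi$ is only $\bbQ_0\nabla\tilde\varphi$, a point the paper glosses over).

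The outstanding obligation in your version is exactly the lemma you flag: that a piecewise $[P_k]^3$ field which is elementwise curl-free, has single-valued tangential traces across interior faces, and has vanishing tangential trace on $\Gamma$, equals $\nabla\Phi$ for a single-valued continuous piecewise $P_{k+1}$ scalar that is constant on each component $\Gamma_i$. Elementwise potentials exist and glue up to face-wise constants, but making those constants consistent on a domain with nontrivial first Betti number (which the paper explicitly allows) requires that every circulation of $\bu_h$ vanish; this follows from the surjectivity of $H_1(\Gamma;\mathbb{R})\to H_1(\Omega;\mathbb{R})$ for bounded domains in $\mathbb{R}^3$ together with $\bu_h\times\bn=0$ on $\Gamma$, but that is a nontrivial topological input you neither state nor prove. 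Until this discrete de Rham step is supplied, your proof is incomplete precisely at the point the paper's Helmholtz-decomposition argument was designed to bypass; with it supplied, your argument is a valid, and in places tighter, alternative.
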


\begin{proof}
For any solution $(\bm{u}_h,s_h, \lambda_h,\bq_h)$ of \eqref{EQ:PDWG-3d:01:tangential} arising from the finite element spaces $\bm{U}_h, M_h, S_h, \bV_h$ with homogeneous data, the following clearly holds true:
\begin{eqnarray}
&& \s_1(\lambda_h, \bm{q}_h;\lambda_h, \bm{q}_h) = 0, \ \s_2(s_h,s_h)=0, \label{eq:11:03:100}\\
&& (\bm{u}_h, \varepsilon\nabla_w \varphi + \nabla_w \times\bm{\psi}) +(\bm{\psi}_0,\nabla_w s_h) = 0,\; \forall\;\varphi, \bm{\psi} \in  S_h\times \bV_h, \label{eq:11:03:101}\\
&&(\bm{q}_0, \nabla_w r)+(\bm{v}, \varepsilon \nabla_w \lambda_h +\nabla_w \times \bm{q}_h) = 0,\; \forall \; (\bm{v}, r)\in \bm{U}_h\times M_h.\label{eq:11:03:102}
\end{eqnarray}
From \eqref{eq:11:03:100}, we have
\begin{eqnarray} \lambda_0=\lambda_b,\;\bm{q}_0\times \bm{n}=\bm{q}_b\times \bm{n},\; s_0=s_b  \text{ on } \partial T, \ \forall T\in\T_h
\end{eqnarray}
so that $\lambda_0\in H^1_{0c}(\Omega)$, $\bm{q}_0\in H(curl;\Omega)$, $s_0\in H^1(\Omega)$ and hence
\begin{eqnarray}
\nabla \lambda_0 = \nabla_w \lambda_h,\; \nabla \times \bm{q}_0 = \nabla_w \times \bm{q}_h, \; \nabla s_0 = \nabla_w s_h.
\end{eqnarray}
By letting $r=0$ in \eqref{eq:11:03:102} we obtain
\begin{eqnarray}\label{EQ:0324:200}
\varepsilon \nabla \lambda_0 + \nabla\times \bm{q}_0 =0\quad \mbox{in } \Omega.
\end{eqnarray}
It follows from $\lambda_0\in H^1_{0c}(\Omega)$ that
\begin{eqnarray*}
(\varepsilon \nabla \lambda_0 + \nabla\times \bm{q}_0, \nabla \lambda_0 )
&=&(\varepsilon \nabla \lambda_0, \nabla \lambda_0 ) + (\nabla\times \bm{q}_0, \nabla \lambda_0 )\\
&=&(\varepsilon \nabla \lambda_0, \nabla \lambda_0 ) + \langle \bm{q}_0, \nabla\lambda_0\times\bn\rangle\\
& = & (\varepsilon \nabla \lambda_0, \nabla \lambda_0 ).
\end{eqnarray*}
Substituting \eqref{EQ:0324:200} into the above identity yields
\begin{eqnarray}
(\varepsilon \nabla \lambda_0, \nabla \lambda_0 )=0,
\end{eqnarray}
which leads to
\begin{eqnarray}
\nabla \lambda_0 = \bm{0},
\end{eqnarray}
so that  $\lambda_0\equiv 0$ as $\lambda_0 \in H^1_{0c}(\Omega)$. This further implies that $\lambda_b\equiv 0$ and $\nabla \times \bm{q}_0 =0$ in $\Omega$.

 Next, from \eqref{eq:11:03:102}, the Lagrangian multiplier $\bm{q}_0$ is seen to satisfy the following equation:
$$
(\bm{q}_0, \nabla_w r)=0,\qquad \forall\; r\in M_h,
$$
which implies $\bm{q}_0\in H(div;\Omega)$, $\nabla\cdot \bm{q}_0 = 0$, and ${\bm q}_0\cdot\bn =0$ on the domain boundary so that $\bm{q}_0 \in \mathbb{H}_{n,0}(\Omega)$.

Finally, from the Helmholtz decomposition \eqref{EQ:helmholtz-2:02} in Theorem \ref{THM:helmholtz-2:02}, there exist $\tilde\varphi\in H_{0c}^1(\Omega)$ and ${\tilde\bpsi}\in H(curl;\Omega)$ such that
$$
\bu_h =   \nabla \tilde \varphi + \varepsilon^{-1} \nabla \times \tilde\bpsi,\ \ \nabla\cdot\tilde\bpsi=0, \ \tilde\bpsi\cdot\bn=0 \ \mbox{on } \partial\Omega.
$$
By letting $\varphi = Q_h \tilde\varphi \in S_h$ and $\bpsi=\textcolor{blue}{\bbQ_h} \tilde\bpsi \in \bV_h$, from the above equation we obtain
$$
\bu_h =   \nabla_w \varphi + \varepsilon^{-1} \nabla_w \times \bpsi.
$$
As $\nabla_w s_h= \nabla s_0$, from the above equation and \eqref{eq:11:03:101} we have
\begin{eqnarray*}
0&=& (\varepsilon\bu_h, \nabla_w \varphi + \varepsilon^{-1} \nabla_w \times \bpsi) + (\bpsi_0, \nabla_w s_h ) \\
&=& (\varepsilon\bu_h, \bu_h) + (Q_0\tilde\bpsi, \nabla s_0)\\
&=& (\varepsilon\bu_h, \bu_h) + (\tilde\bpsi, \nabla s_0)\\
&=& (\varepsilon\bu_h, \bu_h),
\end{eqnarray*}
where we have also used the fact that $\nabla\cdot\tilde\bpsi=0$ and $\tilde\bpsi\cdot\bn=0$ on $\partial\Omega$. It follows that $\bu_h\equiv 0$.

Going back to \eqref{eq:11:03:101}, from $\bu_h=0$ we obtain
$$
(\bpsi_0, \nabla_w s_h) = 0 \quad \forall \bpsi\in \bV_h,
$$
which leads to $\nabla s_0 = \nabla_w s_h =0$ so that $s_0\equiv 0$ and hence $s_b\equiv 0$. This completes the proof of the solution uniqueness for $\bu_h, s_h$, and $\lambda_h$. The solution for the Lagrangian multiplier $\bq_h$ is unique up to a continuous piecewise $[P_k]^3$ polynomial in the harmonic space $\HnHarmonic$.
\end{proof}

The proof of the solution uniqueness indicates that the kernel of the matrix for the PDWG finite element scheme \eqref{EQ:PDWG-3d:01:tangential} consisting of functions in the following form:
$$
(u_h,s_h,\lambda_h,\bm{q}_h) = (0, 0, 0, \boldeta_h) \in \bm{U}_h\times M_h \times S_h \times \bV_h,
$$
where $\boldeta_h \in \HnHarmonic$ is continuous piecewise polynomials in $[P_k]^3$. For simplicity, we denote this kernel space by $\bH_h\subset \HnHarmonic$. For the case of $k=0$ (i.e., piecewise constant approximating functions), the kernel space $\bH_h$ would consist of a constant vector in $\mathbb{R}^3$ satisfying the homogeneous normal boundary condition on $\partial\Omega$. Thus, we have $\bH_h=\{\bm{0}\}$ in nearly all the applications, so that the solution for $\bq_h$ is in fact unique in the usual sense.

\begin{theorem}\label{THM:ExistenceUniqueness-TBVP-PDWG-existence}
The primal-dual weak Galerkin finite element scheme \eqref{EQ:PDWG-3d:01:tangential} has at least one  solution $(u_h,s_h,\lambda_h,\bm{q}_h)$ in the finite element spaces $\bm{U}_h, M_h, S_h, \bV_h$ given in \eqref{EQ:0324:001}.
\end{theorem}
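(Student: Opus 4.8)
The plan is to use the fact that \eqref{EQ:PDWG-3d:01:tangential} is a square linear system over a finite-dimensional space, so that existence of a solution is equivalent to the right-hand side lying in the range of the coefficient operator, i.e. to the right-hand side annihilating the (left) null space. The key is that this null space is already known from Theorem \ref{THM:ExistenceUniqueness-TBVP-PDWG}. Concretely, I would first write \eqref{EQ:PDWG-3d:01:tangential} as $\mathcal{A}Z = b$ with $Z=(\bu_h,s_h,\lambda_h,\bq_h)\in \bm{U}_h\times M_h\times S_h\times \bV_h$, where $\mathcal{A}$ is the matrix of the bilinear form
$$\mathcal{L}\big((\bu_h,s_h,\lambda_h,\bq_h),(\bv,r,\varphi,\bpsi)\big):=\s_1(\lambda_h,\bq_h;\varphi,\bpsi)+B_h(\bu_h,s_h;\varphi,\bpsi)-\s_2(s_h,r)+B_h(\bv,r;\lambda_h,\bq_h)$$
and $b$ is the functional $(\bv,r,\varphi,\bpsi)\mapsto G(\varphi,\bpsi)$. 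I would then observe that $\mathcal{A}$ is \emph{symmetric}: $\s_1$ and $\s_2$ are visibly symmetric in their two blocks of arguments, and the two occurrences of $B_h$ are mutually transposed. Hence $\mathrm{Range}(\mathcal{A})=(\ker\mathcal{A})^{\perp}$, and a solution exists if and only if $b$ vanishes on $\ker\mathcal{A}$.

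Next I would recall, from Theorem \ref{THM:ExistenceUniqueness-TBVP-PDWG} and the discussion following it, that $\ker\mathcal{A}=\{(0,0,0,\boldeta_h):\boldeta_h\in\bH_h\}$, where $\bH_h\subset\HnHarmonic$ consists of continuous piecewise $[P_k]^3$ fields that are $\mathbb{H}_{n,0}$-harmonic; in particular the interior component $\boldeta_{h,0}$ lies in $\mathbb{H}_{n,0}(\Omega)$, and, as shown in that proof (from $\s_1=0$), the boundary component satisfies $\boldeta_{h,b}\times\bn=\boldeta_{h,0}\times\bn$ on each $\partial T$. Evaluating $b$ against such a kernel element gives
$$b\big(0,0,0,\boldeta_h\big)=G(0,\boldeta_h)=(\bm g,\boldeta_{h,0})+\langle\bm\chi,\boldeta_{h,b}\rangle .$$
It remains to show this is zero. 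Using the tangentiality identity \eqref{CompatibilityC:02}, $\bm\chi=\bn\times(\bm\chi\times\bn)$, together with the elementary identity $(\bn\times\bm a)\cdot\bm b=\bm a\cdot(\bm b\times\bn)$, the boundary term becomes $\langle\bm\chi\times\bn,\boldeta_{h,b}\times\bn\rangle=\langle\bm\chi\times\bn,\boldeta_{h,0}\times\bn\rangle$. Therefore $G(0,\boldeta_h)=(\bm g,\boldeta_{h,0})+\langle\bm\chi\times\bn,\boldeta_{h,0}\times\bn\rangle$, which vanishes by the compatibility condition \eqref{EQ:divcurl-compatibility:03} applied to $\bm\eta=\boldeta_{h,0}\in\mathbb{H}_{n,0}(\Omega)$. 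Consequently $b\in\mathrm{Range}(\mathcal{A})$ and the scheme is solvable.

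The main obstacle I anticipate is bookkeeping rather than conceptual: one must make the operator form of \eqref{EQ:PDWG-3d:01:tangential} precise enough that the symmetry of $\mathcal{A}$, and hence the identity $\mathrm{Range}(\mathcal{A})=(\ker\mathcal{A})^{\perp}$, are unambiguous; and one must carefully justify that for a kernel element the trace $\boldeta_{h,b}$ is genuinely the tangential trace of the continuous harmonic field $\boldeta_{h,0}$, so that the reduction of $\langle\bm\chi,\boldeta_{h,b}\rangle$ to a quantity involving only $\boldeta_{h,0}$ is legitimate and the compatibility condition \eqref{EQ:divcurl-compatibility:03} may be invoked. Once these points are settled, the remaining manipulations are the short algebraic steps indicated above, and the existence statement follows.
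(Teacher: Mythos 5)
Your proposal is correct and follows essentially the same route as the paper: existence is reduced, via the symmetry of the discrete system, to checking that the right-hand side functional annihilates the kernel $\{(0,0,0,\boldeta_h):\boldeta_h\in\bH_h\}$ identified in Theorem \ref{THM:ExistenceUniqueness-TBVP-PDWG}, and this is verified from the compatibility conditions. The only cosmetic difference is that the paper invokes \eqref{EQ:divcurl-compatibility:02} directly to conclude $G(0,\boldeta_h)=(\bm{g},\boldeta_h)+\langle\bm{\chi},\boldeta_h\rangle=0$, whereas you route the boundary term through \eqref{CompatibilityC:02} and \eqref{EQ:divcurl-compatibility:03}; these are equivalent, and your added care with the symmetry of $\mathcal{A}$ and with identifying $\boldeta_{h,b}$ as the trace of $\boldeta_{h,0}$ only makes explicit what the paper leaves implicit.
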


\begin{proof}
The linear system \eqref{EQ:PDWG-3d:01:tangential} has solutions as long as the following compatibility condition is satisfied:
$$
G(\varphi,\bpsi)=0\qquad \forall \bpsi\in \bH_h, \ \varphi=0.
$$
In fact, from \eqref{EQ:0324:300} and the compatibility condition \eqref{EQ:divcurl-compatibility:02}, we have
$$
G(0,\boldeta_h)= (\bm{g},\boldeta_h)+\langle \bm{\chi},\boldeta_h\rangle = 0\qquad \forall \boldeta_h\in \bH_h,
$$
which completes the proof of the theorem.
\end{proof}

\section{Error Equations}
For the numerical approximation $(\bm{u}_h,s_h, \lambda_h,\bq_h) \in \bm{U}_h\times M_h\times S_h\times \bV_h$
 of the div-curl system with tangential boundary condition arising from the PDWG scheme \eqref{EQ:PDWG-3d:01:tangential}, we introduce the following error functions:
$$
e_{\bu}= \bbQ_0\bu - \bu_h, \ e_s=Q_h s - s_h,\  e_\lambda=Q_h\lambda - \lambda_h, \ e_{\bm{q}}=\bbQ_h\bm{q}-\bm{q}_h,
$$
where $(\bm{u},s)$ is the exact solution of the variational problem \eqref{EQ:divcurl-tbv-weakform}-\eqref{EQ:div-curl:tbvp:gform}, and $(\lambda,\bm{q})$ is the exact solution of the dual problem \eqref{EQ:div-curl-tbvp:dual-problem}. Recall that  we have $s=0$,  $\lambda=0$, and shall take $\bm{q}=\bm{0}$ (note that the solution for $\bq$ is non-unique).
It is clear that $(e_{\bu},e_s, e_\lambda, e_{\bm{q}})\in \bm{U}_h\times M_h \times S_h \times \bV_h $.

\medskip
\begin{lemma}
The following equations are satisfied by $(e_{\bu}, e_s, e_\lambda, e_{\bm{q}})\in \bm{U}_h\times M_h \times S_h \times \bV_h $:
\begin{eqnarray}
\s_1(e_\lambda,e_{\bm{q}};\varphi, \bm{\psi})
    + B_h(e_{\bu}, e_s; \varphi,\bpsi) &=& \ell_{\bu}(\varphi, \bpsi)\quad \forall \varphi\in S_h, \bm{\psi} \in \bV_h,
\label{EQ:div-curl:EE:November-03:ee:01}\\
-\s_2(e_s, r) + B_h(\bm{v}, r; e_\lambda, e_{\bm{q}}) &=& 0 \quad \forall  \bm{v} \in \bm{U}_h, r\in M_h, \label{EQ:div-curl:EE:November-03:ee:02}
\end{eqnarray}
where
$$
\ell_{\bu}(\varphi, \bpsi):= \langle e_{\bu}, \varepsilon\bn(\varphi_0-\varphi_b) + (\bpsi_b-\bpsi_0)\times\bn\rangle_{\partial\T_h}.
$$
The equations \eqref{EQ:div-curl:EE:November-03:ee:01} and \eqref{EQ:div-curl:EE:November-03:ee:02} are called error equations. Here
\begin{equation}\label{EQ:bdry-ip}
\langle e_{\bu}, \varepsilon\bn(\varphi_0-\varphi_b) + (\bpsi_b-\bpsi_0)\times\bn\rangle_{\partial\T_h}:=\sum_{T\in\T_h} \langle e_{\bu}, \varepsilon\bn(\varphi_0-\varphi_b) + (\bpsi_b-\bpsi_0)\times\bn\rangle_{\partial T}.
\end{equation}
\end{lemma}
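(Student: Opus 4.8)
The plan is to derive both error equations by feeding the $L^2$ projections of the exact solutions into the scheme \eqref{EQ:PDWG-3d:01:tangential}, subtracting the scheme itself, and reading off the residual. I would first record the reductions supplied by the earlier theorems: the exact primal solution has $s=0$, the exact dual solution has $\lambda=0$ (Theorem~\ref{THM:dual-problem-TBVP}), and we may take $\bq=\bm{0}$; since $Q_h$ and $\bbQ_h$ are linear this yields $Q_h s=0$, $Q_h\lambda=0$, $\bbQ_h\bq=\bm{0}$, so the stabilizers $\s_1(Q_h\lambda,\bbQ_h\bq;\cdot)$, $\s_2(Q_h s,\cdot)$ and the term $(\bpsi_0,\nabla_w Q_h s)$ all vanish. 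With this the second error equation \eqref{EQ:div-curl:EE:November-03:ee:02} is immediate: by bilinearity of $B_h$ and $\s_2$ we have $-\s_2(Q_h s,r)+B_h(\bm{v},r;Q_h\lambda,\bbQ_h\bq)=0$, and subtracting the second line of \eqref{EQ:PDWG-3d:01:tangential} leaves precisely $-\s_2(e_s,r)+B_h(\bm{v},r;e_\lambda,e_{\bm{q}})=0$.

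For the first error equation \eqref{EQ:div-curl:EE:November-03:ee:01} the same step reduces the claim to the single consistency identity $(\bbQ_0\bu, \varepsilon\nabla_w\varphi+\nabla_w\times\bpsi)_{\T_h}-G(\varphi,\bpsi)=\ell_{\bu}(\varphi,\bpsi)$ for all $\varphi\in S_h$, $\bpsi\in\bV_h$, because the left side of \eqref{EQ:div-curl:EE:November-03:ee:01} equals $\bigl[\s_1(Q_h\lambda,\bbQ_h\bq;\varphi,\bpsi)+B_h(\bbQ_0\bu,Q_h s;\varphi,\bpsi)\bigr]-\bigl[\s_1(\lambda_h,\bq_h;\varphi,\bpsi)+B_h(\bu_h,s_h;\varphi,\bpsi)\bigr]$; the second bracket is $G(\varphi,\bpsi)$ by the first line of the scheme, and the first bracket collapses to $(\bbQ_0\bu,\varepsilon\nabla_w\varphi+\nabla_w\times\bpsi)_{\T_h}$ by the vanishing of $\s_1(Q_h\lambda,\bbQ_h\bq;\cdot)$ and $(\bpsi_0,\nabla_w Q_h s)$. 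To prove the consistency identity I would compute $(\bbQ_0\bu,\varepsilon\nabla_w\varphi+\nabla_w\times\bpsi)_{\T_h}$ element by element: insert the definitions \eqref{EQ:dis_WeakGradient}--\eqref{EQ:dis_WeakCurl} of the discrete weak gradient and weak curl with the polynomial test function $\bbQ_0\bu\in[P_k(T)]^3$, integrate by parts on each $T$ to replace the weak operators by the classical $\nabla$ and $\nabla\times$ acting on $\varphi_0,\bpsi_0$, and pick up element-boundary integrals involving $\varphi_0-\varphi_b$ and $\bpsi_b-\bpsi_0$. Since $\nabla\times\bpsi_0\in[P_{k-1}(T)]^3$, the $L^2$-orthogonality of $\bbQ_0$ lets me recover $\bu$ in the remaining volume integrals; a further integration by parts together with the strong-form equations $\nabla\cdot(\varepsilon\bu)=f$, $\nabla\times\bu=\bm{g}$, $\bu\times\bn=\bm{\chi}$ on $\Gamma$, and $\langle\varepsilon\bu\cdot\bn_i,1\rangle_{\Gamma_i}=\alpha_i$ then converts the volume part into $-(f,\varphi_0)_\Omega+(\bm{g},\bpsi_0)_\Omega$ plus element-boundary integrals of the traces of $\bu$. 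Reorganizing these boundary integrals by faces — the interior-face contributions of the single-valued $\varphi_b,\bpsi_b$ and of the single-valued normal trace of $\varepsilon\bu$ and tangential trace of $\bu$ cancel or telescope, the boundary-face $\varphi_b|_{\Gamma_i}=c_i$ contributions match $\sum_i\alpha_i\varphi|_{\Gamma_i}$ via \eqref{EQ:div-curl-TBC2}, and the boundary-face terms in $\bpsi_b$ reproduce $\langle\bm{\chi},\bpsi_b\rangle$ — the total becomes $G(\varphi,\bpsi)$ plus a residual supported on $\partial\T_h$.

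The surviving residual is a sum of element-boundary integrals pairing the discontinuous jumps $\varphi_0-\varphi_b$ and $\bpsi_b-\bpsi_0$ against the mismatch between $\bbQ_0\bu$ and the relevant traces, and collecting these is what yields $\ell_{\bu}(\varphi,\bpsi)=\langle e_{\bu},\varepsilon\bn(\varphi_0-\varphi_b)+(\bpsi_b-\bpsi_0)\times\bn\rangle_{\partial\T_h}$. I expect the boundary bookkeeping to be the main obstacle: one must keep careful track of the orientation of $\bn$ on each face, of the single-valuedness of $\varphi_b,\bpsi_b$ and of the $\varepsilon$-normal and tangential traces of $\bu$ across interior faces, and of the fact that $\varepsilon$ is only $L^\infty$, so that $\varepsilon\bbQ_0\bu$ and $\varepsilon\nabla\varphi_0$ are not polynomials — the auxiliary $L^2$ projections this forces into the weak-gradient computation have to be shown to drop out (or to cancel against their counterparts), and the signs in the cross-product identities must be tracked so that the residual lands on $\varphi_0-\varphi_b$ and $\bpsi_b-\bpsi_0$ exactly as in $\ell_{\bu}$, with no uncontrolled volume terms remaining.
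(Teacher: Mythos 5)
Your proposal follows essentially the same route as the paper's proof: use $s=0$, $\lambda=0$, $\bq=\bm 0$ to annihilate the stabilizer terms and $(\bpsi_0,\nabla_w Q_h s)$, establish the consistency identity for $B_h(\bbQ_0\bu, Q_h s;\varphi,\bpsi)$ by trading the weak operators for classical ones via integration by parts, the orthogonality of $\bbQ_0$, and the strong form of the div-curl system, and then subtract the scheme, the second error equation being immediate exactly as you say. One remark: the residual your derivation (and the paper's own computation) actually produces is $\langle \bu-\bbQ_0\bu,\,\varepsilon\bn(\varphi_0-\varphi_b)+(\bpsi_b-\bpsi_0)\times\bn\rangle_{\partial\T_h}$ — so the $e_{\bu}$ appearing in the lemma's definition of $\ell_{\bu}$ should be read as $\bu-\bbQ_0\bu$ (this is the form used in the subsequent error estimates) — and your observation that $\varepsilon\bbQ_0\bu$ and $\varepsilon\nabla\varphi_0$ need not be polynomials for merely $L^\infty$ coefficients is a genuine technicality that the paper's proof passes over silently.
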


\begin{proof}
We first derive an equation for the $L^2$ projections of the exact solution. To this end, for the exact solution $(\bu,s=0)$,  we have
\begin{equation}\label{EQ:div-curl:EE:November-03:100}
\begin{split}
B_h(\bbQ_0\bu, Q_h s; \varphi,\bpsi) =& (\bbQ_0\bu, \varepsilon \nabla_w \varphi + \nabla_w \times \bpsi) + (\bpsi_0, \nabla_w Q_h s)\\
=&(\bbQ_0\bu, \varepsilon \nabla \varphi_0 + \nabla \times \bpsi_0)\\
 & + \langle \bbQ_0\bu, \varepsilon\bn(\varphi_b-\varphi_0) + (\bpsi_0-\bpsi_b)\times\bn\rangle_{\partial\T_h}\\
=& (\bu, \varepsilon \nabla \varphi_0 + \nabla \times \bpsi_0)\\
& + \langle \bbQ_0\bu, \varepsilon\bn(\varphi_b-\varphi_0) + (\bpsi_0-\bpsi_b)\times\bn\rangle_{\partial\T_h}\\
=& -(\nabla\cdot(\varepsilon\bu),\varphi_0) + (\nabla\times\bu, \bpsi_0)\\
& + \langle \bu, \varepsilon\bn(\varphi_0-\varphi_b) + (\bpsi_b-\bpsi_0)\times\bn\rangle_{\partial\T_h}\\
& + \langle \bbQ_0\bu, \varepsilon\bn(\varphi_b-\varphi_0) + (\bpsi_0-\bpsi_b)\times\bn\rangle_{\partial\T_h}\\
& - \langle \bm{u}, \bm{\psi}_b\times\bn\rangle_{\partial\Omega}+\sum_{i=1}^L\alpha_i \varphi_b|_{\Gamma_i}\\
=& - (f,\varphi_0) +(\bm{g},\bm{\psi}_0)\\
&+ \langle \bu-Q_h\bu, \varepsilon\bn(\varphi_0-\varphi_b) + (\bpsi_b-\bpsi_0)\times\bn\rangle_{\partial\T_h}\\
&+ \langle (\bm{u}\times\bn)\times\bn, \bm{\psi}_b\times\bn\rangle_{\partial\Omega} + \sum_{i=1}^L\alpha_i \varphi_b|_{\Gamma_i}\\
=& \langle \bm{\chi}\times\bn, \bm{\psi}_b\times\bn\rangle_{\partial\Omega} - (f,\varphi_0) +(\bm{g},\bm{\psi}_0)+\sum_{i=1}^L\alpha_i \varphi_b|_{\Gamma_i}\\
&+ \langle \bu-Q_h\bu, \varepsilon\bn(\varphi_0-\varphi_b) + (\bpsi_b-\bpsi_0)\times\bn\rangle_{\partial\T_h},
\end{split}
\end{equation}
where we have used the usual integration by parts and the fact that $\bu$ satisfies the div-curl system \eqref{EQ:div-curl-1}-\eqref{EQ:div-curl-TBC2}, plus $\langle\bu, \varepsilon\bn\varphi_b\rangle_{\T_h} =\sum_i\alpha_i \varphi_b|_{\Gamma_i}$ and $\bu\times\bn = \bm{\chi}$ on $\partial\Omega$.
Thus, from \eqref{EQ:div-curl:EE:November-03:100} and the fact that $\lambda=0$ and $\bm{q}=0$ we arrive at
\begin{equation}\label{EQ:div-curl:EE:November-03:ee:01original}
\begin{split}
&\s_1(Q_h\lambda-\lambda_h, \bbQ_h\bm{q}-\bm{q}_h;\varphi, \bm{\psi})
+ B_h(\bbQ_0\bu-\bu_h, Q_h s - s_h; \varphi,\bpsi) \\
= & \langle \bu-\bbQ_0\bu, \varepsilon\bn(\varphi_0-\varphi_b) + (\bpsi_b-\bpsi_0)\times\bn\rangle_{\partial\T_h},
\;\forall \;\varphi, \bm{\psi}.
\end{split}
\end{equation}

The second error equation can be easily seen as follows:
\begin{equation}\label{EQ:div-curl:EE:November-03:ee:02original}
-\s_2(Q_h s-s_h, r)
+ B_h(\bm{v}, r; Q_h\lambda - \lambda_h, \bbQ_h\bm{q}- \bm{q}_h) = 0, \;\forall \; \bm{v}, r,
\end{equation}
where we have used the fact that $s=0, \ \bm{q}=0$, and $\lambda=0$. The equations
\eqref{EQ:div-curl:EE:November-03:ee:01original}-\eqref{EQ:div-curl:EE:November-03:ee:02original} lead to \eqref{EQ:div-curl:EE:November-03:ee:01}- \eqref{EQ:div-curl:EE:November-03:ee:02}.
\end{proof}

\section{Error Estimates}
In the space $M_h$ and $S_h\times\bV_h$, we introduce the following semi-norms
\begin{eqnarray}
&&\3bar s \3bar^2 = \s_2(s, s),\\
&&\3bar (\lambda,\bq) \3bar^2 =  \s_1(\lambda,\bq;\lambda, \bq).
\end{eqnarray}

\begin{lemma}
The error functions $e_s$ and $(e_\lambda,e_\bq)$ have the following error estimates
\begin{equation}\label{EQ:error-estimate-t-part01:new}
\3bar(e_\lambda, e_{\bm{q}})\3bar+ \3bar e_s\3bar \leq Ch^{k+\theta}\|\bu\|_{k+\theta},
\end{equation}
where $\theta\in (1/2,1]$ and $k$ is the order of polynomials in the finite element space $\bm{U}_h$.
\end{lemma}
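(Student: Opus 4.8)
The plan is to run the standard primal--dual energy argument on the error equations \eqref{EQ:div-curl:EE:November-03:ee:01}--\eqref{EQ:div-curl:EE:November-03:ee:02}. First I would take $\varphi=e_\lambda$ and $\bpsi=e_{\bm{q}}$ in \eqref{EQ:div-curl:EE:November-03:ee:01}, giving $\s_1(e_\lambda,e_{\bm{q}};e_\lambda,e_{\bm{q}})+B_h(e_{\bu},e_s;e_\lambda,e_{\bm{q}})=\ell_{\bu}(e_\lambda,e_{\bm{q}})$, and $\bm{v}=e_{\bu}$, $r=e_s$ in \eqref{EQ:div-curl:EE:November-03:ee:02}, giving $B_h(e_{\bu},e_s;e_\lambda,e_{\bm{q}})=\s_2(e_s,e_s)$. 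Subtracting the second identity from the first cancels the two occurrences of $B_h(e_{\bu},e_s;e_\lambda,e_{\bm{q}})$ and leaves the key energy identity
\begin{equation*}
\3bar(e_\lambda,e_{\bm{q}})\3bar^2+\3bar e_s\3bar^2=\ell_{\bu}(e_\lambda,e_{\bm{q}}),
\end{equation*}
so that the whole estimate reduces to controlling the consistency functional on the right-hand side.

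For the second step I would use the explicit form of $\ell_{\bu}$ produced in the derivation of the error equations, i.e. $\ell_{\bu}(\varphi,\bpsi)=\langle\bu-\bbQ_0\bu,\ \varepsilon\bn(\varphi_0-\varphi_b)+(\bpsi_b-\bpsi_0)\times\bn\rangle_{\partial\T_h}$. Because $\varepsilon$ has entries in $L^\infty(\Omega)$ and $|\bn|=1$, a face-by-face Cauchy--Schwarz inequality, inserting the weights $h_T^{1/2}$ and $h_T^{-1/2}$, bounds $|\ell_{\bu}(e_\lambda,e_{\bm{q}})|$ by $C\big(\sum_{T\in\T_h}h_T\|\bu-\bbQ_0\bu\|_{\partial T}^2\big)^{1/2}$ times $\big(\sum_{T\in\T_h}h_T^{-1}(\|(e_\lambda)_0-(e_\lambda)_b\|_{\partial T}^2+\|(e_{\bm{q}})_0\times\bn-(e_{\bm{q}})_b\times\bn\|_{\partial T}^2)\big)^{1/2}$; the second factor is controlled by $C\3bar(e_\lambda,e_{\bm{q}})\3bar$ by the very definition of $\s_1$. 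Plugging this into the energy identity and absorbing the factor $\3bar(e_\lambda,e_{\bm{q}})\3bar$ into the left-hand side by Young's inequality gives $\3bar(e_\lambda,e_{\bm{q}})\3bar^2+\3bar e_s\3bar^2\le C\sum_{T\in\T_h}h_T\|\bu-\bbQ_0\bu\|_{\partial T}^2$.

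It remains to establish the approximation bound $\sum_{T\in\T_h}h_T\|\bu-\bbQ_0\bu\|_{\partial T}^2\le Ch^{2(k+\theta)}\|\bu\|_{k+\theta}^2$. On each shape-regular element $T$ I would combine a scaled trace inequality with the standard $L^2$-projection error estimates $\|\bu-\bbQ_0\bu\|_{m,T}\le Ch_T^{k+\theta-m}\|\bu\|_{k+\theta,T}$, $m=0,1$ (available since $\tfrac12<\theta\le1$), to get $\|\bu-\bbQ_0\bu\|_{\partial T}^2\le Ch_T^{2(k+\theta)-1}\|\bu\|_{k+\theta,T}^2$; summing over $T$ and taking square roots then yields \eqref{EQ:error-estimate-t-part01:new}. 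The step I expect to be the main obstacle is exactly this trace/approximation estimate in the low-regularity regime, especially when $k=0$: then $\bu-\bbQ_0\bu$ lies only in $H^\theta(T)$ with $\tfrac12<\theta\le1$ and in general not in $H^1(T)$, so the classical $H^1$-based scaled trace inequality no longer applies and one must instead invoke a fractional-order trace theorem on general polyhedral elements, keeping the sharp scaling $h_T^{2\theta-1}$ with a constant uniform under the shape-regularity assumption. The first two steps are purely algebraic manipulations of the error equations and the stabilization forms and should be routine.
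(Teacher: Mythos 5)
Your proposal is correct and follows essentially the same route as the paper: test the first error equation with $(e_\lambda,e_{\bm{q}})$ and the second with $(e_{\bm{u}},e_s)$ to cancel the $B_h$ terms and obtain $\3bar(e_\lambda,e_{\bm{q}})\3bar^2+\3bar e_s\3bar^2=\ell_{\bm{u}}(e_\lambda,e_{\bm{q}})$, then apply weighted Cauchy--Schwarz and the trace/approximation bound $\sum_T h_T\|\bm{u}-\mathbb{Q}_0\bm{u}\|_{\partial T}^2\le Ch^{2(k+\theta)}\|\bm{u}\|_{k+\theta}^2$. Your remark about needing a fractional-order trace inequality when $\theta\le 1$ (so that $\bm{u}-\mathbb{Q}_0\bm{u}$ need not lie in $H^1(T)$) is a valid point that the paper passes over silently.
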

\begin{proof}
By choosing $\varphi= e_\lambda$, $\bm{\psi} = e_{\bm{q}}$ in \eqref{EQ:div-curl:EE:November-03:ee:01}, and
\textcolor{blue}{$\bm{v} = e_{\bu}$}, $r = e_s$ in \eqref{EQ:div-curl:EE:November-03:ee:02}, the two resulting equations give rise to
\begin{equation*}
\begin{split}
\s_1(e_\lambda, e_{\bm{q}}; e_\lambda, e_{\bm{q}}) + \s_2(e_s, e_s)
= &\langle \bu-\bbQ_0\bu, \varepsilon\bn(e_{\lambda,0}-e_{\lambda,b}) + (e_{\bm{q},b}-e_{\bm{q},0})\times\bn\rangle_{\partial\T_h},
\end{split}
\end{equation*}
which, from the Cauchy-Schwarz inequality, leads to
\begin{eqnarray}\label{EQ:error-estimate-t-part01}
&\s_1(e_\lambda, e_{\bm{q}}; e_\lambda, e_{\bm{q}}) +\s_2(e_s, e_s) &\leq C \sum_{T\in\T_h} h_T\|\bu-\bbQ_0\bu\|_{\pT}^2\\
&&\leq Ch^{2k+2\theta}\|\bu\|_{k+\theta}^2\nonumber
\end{eqnarray}
which verifies the error estimate \eqref{EQ:error-estimate-t-part01:new}.
\end{proof}
\medskip

Next we derive an estimate for the error function $e_\bu$. To this end, from the Helmholtz decomposition \eqref{EQ:helmholtz-2:02}, there exist two functions $\tilde\phi\in H_{0c}^1(\Omega)$ and $\bm{\tilde\psi}\in  H(curl;\Omega)\cap H(div;\Omega)$ such that
$$
e_\bu = \varepsilon^{-1} \nabla\times\bm{\tilde\psi} + \nabla\tilde\phi,\ \nabla\cdot\tilde\bpsi=0,\ \tilde\bpsi\cdot\bn=0\ \ \mbox{on } \partial\Omega.
$$
Assume that the following $H^\alpha$-regularity holds true for this Helmholtz decomposition:
\begin{equation}\label{EQ:regularity-assumption-helmholtz-01}
\|\bm{\tilde\psi}\|_\alpha + \|\tilde\phi\|_\alpha \leq C \|e_\bu\|_0,
\end{equation}
with some $\alpha\in (1/2, 1]$.

\medskip
\begin{theorem}\label{thm.error.u}
Let $\bm{u} \in [L^{2}(\Omega)]^3 $ be the solution of \eqref{EQ:div-curl-1}-\eqref{EQ:div-curl-TBC2}, and $\bm{u}_h \in \bm{U}_h$ be the solution of the PDWG scheme \eqref{EQ:PDWG-3d:01:tangential}. Then, the following error estimate holds true:
\begin{eqnarray}\label{eq:error_estimates.l2.u}
\| \varepsilon^{\frac12}(\bbQ_0\bm{u} - \bm{u}_h)\| \leq Ch^{k+\theta+\alpha-1} \|\bu\|_{k+\theta},
\end{eqnarray}
where $\alpha\in (1/2, 1]$ is the regularity parameter in \eqref{EQ:regularity-assumption-helmholtz-01}, $k+\theta$ is the regularity of $\bu$ with some $\theta\in (1/2,1]$ and $k$ is the order of polynomials for the finite element space $\bm{U}_h$.
\end{theorem}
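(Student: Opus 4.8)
The estimate \eqref{eq:error_estimates.l2.u} will be obtained by testing the first error equation \eqref{EQ:div-curl:EE:November-03:ee:01} with a carefully chosen pair $(\varphi,\bpsi)$ and exploiting the coercivity $\|\varepsilon^{\frac12}e_{\bu}\|^2=(\varepsilon e_{\bu},e_{\bu})$. Since $e_{\bu}\in\bm{U}_h$ is piecewise polynomial of degree $k$, the natural supremizer is furnished by the Helmholtz decomposition $e_{\bu}=\varepsilon^{-1}\nabla\times\tilde\bpsi+\nabla\tilde\phi$ with $\nabla\cdot\tilde\bpsi=0$ and $\tilde\bpsi\cdot\bn=0$ on $\partial\Omega$: I would take $\varphi=Q_h\tilde\phi$ and $\bpsi=\bbQ_h\tilde\bpsi$. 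Membership $\varphi\in S_h$ is clear since $\tilde\phi\in H^1_{0c}(\Omega)$ and $Q_b$ preserves constants, while $\bpsi\in\bV_h$ follows because the boundary component of $\bV(k,T)$ already consists of tangential polynomials and $\tilde\bpsi$ is single-valued.

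The first ingredient is a pair of commuting-projection identities, available precisely because the weak operators here use the same degree $k$ as the unknowns: integration by parts on each $T$ gives $\nabla_w(Q_h\tilde\phi)|_T=\bbQ_0(\nabla\tilde\phi)|_T$ and $\nabla_w\times(\bbQ_h\tilde\bpsi)|_T=\bbQ_0(\nabla\times\tilde\bpsi)|_T$, the latter using $\tilde\bpsi\in[H^\alpha]^3$ with $\alpha>\frac12$ so that $\nabla\times\tilde\bpsi\in[L^2]^3$ and the tangential trace of $\tilde\bpsi$ lies in $L^2(\partial T)$. Because $e_{\bu}$ is itself a degree-$k$ polynomial on each element, pairing with these projections and invoking the Helmholtz identity yields
\[
(e_{\bu},\,\varepsilon\nabla_w\varphi+\nabla_w\times\bpsi)=(\varepsilon e_{\bu},e_{\bu})-\big(\varepsilon e_{\bu}-\bbQ_0(\varepsilon e_{\bu}),\;\nabla\tilde\phi-\bbQ_0\nabla\tilde\phi\big).
\]
Equating the left-hand side with $B_h(e_{\bu},e_s;\varphi,\bpsi)-(\bpsi_0,\nabla_w e_s)=\ell_{\bu}(\varphi,\bpsi)-\s_1(e_\lambda,e_{\bm q};\varphi,\bpsi)-(\bpsi_0,\nabla_w e_s)$, read off from \eqref{EQ:div-curl:EE:November-03:ee:01}, expresses $\|\varepsilon^{\frac12}e_{\bu}\|^2$ as a sum of four controllable terms.

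For $\ell_{\bu}(\varphi,\bpsi)$ I would use Cauchy--Schwarz on $\partial\T_h$ together with $\sum_T h_T\|\bu-\bbQ_0\bu\|_{\partial T}^2\lesssim h^{2k+2\theta}\|\bu\|_{k+\theta}^2$, giving $|\ell_{\bu}(\varphi,\bpsi)|\lesssim h^{k+\theta}\|\bu\|_{k+\theta}\,\3bar(\varphi,\bpsi)\3bar$; the term $\s_1(e_\lambda,e_{\bm q};\varphi,\bpsi)$ is bounded by $\3bar(e_\lambda,e_{\bm q})\3bar\,\3bar(\varphi,\bpsi)\3bar$ and then by the preceding Lemma \eqref{EQ:error-estimate-t-part01:new}. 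For $(\bpsi_0,\nabla_w e_s)$ I would integrate $\nabla_w e_s$ by parts element-wise and use $\nabla\cdot\tilde\bpsi=0$, $\tilde\bpsi\cdot\bn=0$ on $\partial\Omega$, and single-valuedness of $e_{s,b}$ and of $\tilde\bpsi\cdot\bn$ on interior faces to rewrite it as $\sum_T\langle e_{s,0}-e_{s,b},(\tilde\bpsi-\bbQ_0\tilde\bpsi)\cdot\bn\rangle_{\partial T}$, hence $\lesssim h^{\alpha}\,\3bar e_s\3bar\,\|e_{\bu}\|$, again small by Lemma \eqref{EQ:error-estimate-t-part01:new}. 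Finally the consistency remainder $(\varepsilon e_{\bu}-\bbQ_0(\varepsilon e_{\bu}),\nabla\tilde\phi-\bbQ_0\nabla\tilde\phi)$ is bounded by $\|e_{\bu}\|$ times the $L^2$-projection error of $\nabla\tilde\phi$, which is $\lesssim h^{\alpha}\|e_{\bu}\|$ by \eqref{EQ:regularity-assumption-helmholtz-01} and standard approximation.

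The decisive and most delicate quantity is $\3bar(\varphi,\bpsi)\3bar$ with $\varphi=Q_h\tilde\phi$, $\bpsi=\bbQ_h\tilde\bpsi$: every boundary difference entering $\s_1$ is an $L^2$-projection error of a function that is only in $H^\alpha$, so $h_T^{-1}\|\varphi_0-\varphi_b\|_{\partial T}^2+h_T^{-1}\|(\bpsi_0-\bpsi_b)\times\bn\|_{\partial T}^2\lesssim h_T^{2\alpha-2}\big(\|\tilde\phi\|_{\alpha,T}^2+\|\tilde\bpsi\|_{\alpha,T}^2\big)$, and summing with \eqref{EQ:regularity-assumption-helmholtz-01} gives $\3bar(\varphi,\bpsi)\3bar\lesssim h^{\alpha-1}\|e_{\bu}\|$. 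This fractional trace/approximation step --- legitimate only because $\alpha>\frac12$, so $H^\alpha(T)\hookrightarrow L^2(\partial T)$ --- is the main obstacle, and the factor $h^{\alpha-1}$ it produces is precisely what degrades the rate to $k+\theta+\alpha-1$. Collecting the four bounds gives $\|\varepsilon^{\frac12}e_{\bu}\|^2\lesssim h^{k+\theta+\alpha-1}\|\bu\|_{k+\theta}\|e_{\bu}\|+h^{\alpha}\|e_{\bu}\|^2$; absorbing the last term by the uniform positive-definiteness of $\varepsilon$ and cancelling one power of $\|e_{\bu}\|$ yields \eqref{eq:error_estimates.l2.u}.
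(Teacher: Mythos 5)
Your proof follows the paper's argument essentially step for step: the same test pair $\varphi=Q_h\tilde\phi$, $\bpsi=\bbQ_h\tilde{\bpsi}$ built from the Helmholtz decomposition of $e_{\bu}$, the same rewriting of $(\bpsi_0,\nabla_w e_s)$ as $\langle(\tilde{\bpsi}-Q_0\tilde{\bpsi})\cdot\bn,\,e_{s,0}-e_{s,b}\rangle_{\partial\T_h}$ using $\nabla\cdot\tilde{\bpsi}=0$ and $\tilde{\bpsi}\cdot\bn=0$, and the same fractional trace/approximation estimates producing the $h^{\alpha-1}$ loss, followed by the regularity assumption \eqref{EQ:regularity-assumption-helmholtz-01} and the bound \eqref{EQ:error-estimate-t-part01:new}. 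The one place you genuinely diverge is the remainder $(\varepsilon e_{\bu}-\bbQ_0(\varepsilon e_{\bu}),\,\nabla\tilde\phi-\bbQ_0\nabla\tilde\phi)$: the paper simply asserts $(e_{\bu},\varepsilon\nabla_w Q_h\tilde\phi)=(e_{\bu},\varepsilon\nabla\tilde\phi)$, which is exact only when $\varepsilon$ is elementwise constant, so retaining this term is the more careful treatment; be aware, though, that your claimed bound $O(h^{\alpha})\|e_{\bu}\|^2$ for it requires $\nabla\tilde\phi$ (not merely $\tilde\phi$) to lie in $H^{\alpha}$ elementwise, which goes slightly beyond the literal statement of \eqref{EQ:regularity-assumption-helmholtz-01}.
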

\begin{proof}
By choosing $\bm{\psi}=\textcolor{blue}{\bbQ_h}\bm{\tilde\psi}$ and $\varphi= Q_h\tilde\phi$ in equation \eqref{EQ:div-curl:EE:November-03:ee:01}, we obtain
\begin{eqnarray}\label{EQ:div-curl:EE:November-03:ee:01-new}
B_h(e_\bu, e_s; \varphi,\bpsi)
&=& \langle \bu-\bbQ_0\bu, \varepsilon\bn(\varphi_0-\varphi_b) + (\bpsi_b-\bpsi_0)\times\bn\rangle_{\partial\T_h}\\
&&-\s_1(e_\lambda, e_{\bm{q}};\varphi, \bm{\psi}).\nonumber
\end{eqnarray}
On the other hand, we have
\begin{eqnarray}\label{EQ:11:05:300}
B_h(e_\bu, e_s; \varphi,\bm{\psi})&=&(e_\bu, \varepsilon\nabla_w Q_h\tilde\phi + \nabla_w\times Q_h\bm{\tilde\psi})+(Q_0\bm{\tilde\psi},\nabla_w e_s)\\
&= & (e_\bu, \varepsilon\nabla \tilde\phi + \nabla\times \bm{\tilde\psi})+(Q_0\bm{\tilde\psi},\nabla_w e_s) \nonumber\\
&= & (e_\bu, \varepsilon e_\bu) + (Q_0\bm{\tilde\psi},\nabla_w e_s),   \nonumber
\end{eqnarray}
and from the definition of the weak gradient
\begin{equation*}
\begin{split}
(Q_0\bm{\tilde\psi},\nabla_w e_s) = &(Q_0\bm{\tilde\psi},\nabla e_{s,0}) + \langle Q_0\bm{\tilde\psi}\cdot\bn, e_{s,b}-e_{s,0}\rangle_{\partial\T_h} \\
= & (\bm{\tilde\psi},\nabla e_{s,0}) + \langle Q_0\bm{\tilde\psi}\cdot\bn, e_{s,b}-e_{s,0}\rangle_{\partial\T_h} \\
= & -(\nabla\cdot\bm{\tilde\psi},\nabla e_{s,0}) +\langle \bm{\tilde\psi}\cdot\bn, e_{s,0} \rangle_{\partial\T_h} + \langle Q_0\bm{\tilde\psi}\cdot\bn, e_{s,b}-e_{s,0}\rangle_{\partial\T_h}\\
= & \langle \bm{\tilde\psi}\cdot\bn, e_{s,0} - e_{s,b}\rangle_{\partial\T_h} + \langle Q_0\bm{\tilde\psi}\cdot\bn, e_{s,b}-e_{s,0}\rangle_{\partial\T_h}\\
= & \langle (\bm{\tilde\psi} - Q_0\bm{\tilde\psi})\cdot\bn, e_{s,0} - e_{s,b}\rangle_{\partial\T_h}.
\end{split}
\end{equation*}
Substituting the above into \eqref{EQ:11:05:300} then \eqref{EQ:div-curl:EE:November-03:ee:01-new} yields
\begin{equation*}
\begin{split}
\|\varepsilon^{\frac12}e_\bu\|^2 = & B_h(e_\bu, e_s; \varphi,\bm{\psi}) - \langle (\bm{\tilde\psi} - Q_0\bm{\tilde\psi})\cdot\bn, e_{s,0} - e_{s,b}\rangle_{\partial\T_h}\\
= & \langle \bu-\bbQ_0\bu, \varepsilon\bn(\varphi_0-\varphi_b) + (\bpsi_b-\bpsi_0)\times\bn\rangle_{\partial\T_h} - \s_1(e_\lambda, e_{\bm{q}};\varphi, \bm{\psi}) \\
& - \langle (\bm{\tilde\psi} - Q_0\bm{\tilde\psi})\cdot\bn, e_{s,0} - e_{s,b}\rangle_{\partial\T_h},
\end{split}
\end{equation*}
which gives
\begin{equation*}
\begin{split}
\|\varepsilon^{\frac12}e_\bu\|^2 \le &
Ch^{\alpha-1} (\|\bu-\bbQ_0\bu\|_0 + h^\theta \|\bu-\bbQ_0\bu\|_\theta + \3bar (e_\lambda, e_{\bm{q}})\3bar)  (\|\tilde\varphi\|_\alpha + \|\bm{\tilde\psi}\|_\alpha)\\
& + C h^\alpha \3bar e_{s}\3bar \|\bm{\tilde\psi}\|_\alpha,
\end{split}
\end{equation*}
which, by using the regularity assumption \eqref{EQ:regularity-assumption-helmholtz-01}, leads to
\begin{equation*}
\begin{split}
\|\varepsilon^{\frac12}e_\bu\| \le &
Ch^{\alpha-1} (\|\bu-\bbQ_0\bu\|_0 + h^\theta \|\bu-\bbQ_0\bu\|_\theta + \3bar (e_\lambda, e_{\bm{q}})\3bar) \\
& + C h^{\alpha} \3bar e_{s}\3bar.
\end{split}
\end{equation*}
Substituting \eqref{EQ:error-estimate-t-part01:new} into the above estimate gives
\begin{equation*}
\begin{split}
\|\varepsilon^{\frac12}e_\bu\| \le &
Ch^{k+\theta+\alpha-1} \|\bu\|_{k+\theta}.
\end{split}
\end{equation*}
This completes the proof of the theorem.
\end{proof}

\section{Numerical Experiments}
The goal of this section is to numerically demonstrating the performance of the PDWG finite element method \eqref{EQ:PDWG-3d:01:tangential}. Various numerical examples are employed in the numerical experiments; some are defined on convex domains and the others are on non-convex polyhedral domains with various topological properties. In the case of convex domain, we use a test problem defined on the unit cube $\Omega = (0,1)^3$. The non-convex domains include domains with single or multiple holes. The PDWG scheme \eqref{EQ:PDWG-3d:01:tangential} was implemented by using the lowest order element; i.e., $k = 0$, so that the vector field $\bm{u}$ is approximated by piecewise constant functions.

\subsection{Tests on the unit cubic domain}
The computational domain is given by $\Omega=(0,1)^3$, which is partitioned into cubic elements with
different meshsize $h$. The div-curl system with $\varepsilon=I$ was considered.  Our test examples assumed the following exact solutions:
\begin{equation*}
\begin{array}{lll}
&\bm{u}_1 =
\begin{bmatrix}
y(1-y)z(1-z)\\
x(1-x)z(1-z)\\
x(1-x)y(1-y)
\end{bmatrix}
,
&\bm{u}_2 =
\begin{bmatrix}
\sin(\pi x)\sin(\pi y)\sin(\pi z)\\
xyz\\
(x+1)(y+1)(z+1)
\end{bmatrix},\\
&\bm{u}_3 =
\begin{bmatrix}
y(1-y)z(1-z)\\
x(1-x)z(1-z)\\
r^{\frac{2}{3}}\sin(2\theta)(1-x)(1-y)
\end{bmatrix},
&\bm{u}_4 =
\begin{bmatrix}
\nabla \textcolor{blue}{(r^{\frac{2}{3}}\sin(\frac{2}{3}\theta))}
\end{bmatrix},
\end{array}
\end{equation*}
where the cylindrical coordinates are used in the third and fourth test cases; i.e., $r=\sqrt{x^2+y^2}$ and $\theta=\tan^{-1}(y/x)$. Note that the vector field $\bm{u}_3$ is in $H^{1+\frac23-\epsilon}(\Omega)$ and $\bm{u}_4$ is in $H^{\frac23 -\epsilon}(\Omega)$ with $\epsilon >0$.
The test examples with $\bm{u}_1$, $\bm{u}_3$, and $\bm{u}_4$ as exact solutions have been considered in \cite{LiYeZhang_2018,YeZhangZhu_2020}. The right-hand side functions $f$ and $\bm{g}$ are chosen to match the exact solution for each test example. The tangential boundary condition was imposed on the boundary $\Gamma=\partial \Omega$.

The approximation error and convergence rates for the lowest order PDWG scheme \eqref{EQ:PDWG-3d:01:tangential} are reported in Table \ref{table1.1}. A super-convergence of order $2$ was clearly seen for the test case with exact solution $\bm{u}_1$. Observe that the tangential boundary condition is of homogenous for the case of $\bm{u}_1$. For the case of $\bm{u}_2$ and $\bm{u}_3$, the numerical convergence has an order higher than the optimal order of $r =1$, which outperforms the convergence theory developed in previous sections.
For $\bm{u}_4$, the numerical results show that the PDWG scheme performs better than the theorerical rate of convergence $r=\frac23$.

\begin{table}[!h]
\begin{center}
\caption{Error and convergence performance of the PDWG scheme  for the div-curl systems on cubic meshes. $r$ refers to the order of convergence in $O(h^r).$ }\label{table1.1}

\begin{tabular}{||c|cc|cc||}
\hline
\multicolumn{3}{|>{\columncolor{mypink}}c|}{ $\bm{u_1}$ }&\multicolumn{2}{|>{\columncolor{green!15}}c|}{$\bm{u_2}$}\\
\hline
$n$ & $\|e_{\bm{u}} \|_{0}$ & $r=$ & $\|e_{\bm{u}}\|_{0}$ & $r=$   \\
\hline
2  &   2.48e-02 &  -    &    1.57e-01 &     \\
4  &   5.34e-03 &  2.22 &    7.64e-02 & 1.04\\
8  &   1.24e-03 &  2.11 &    2.75e-02 & 1.47\\
16 &   3.03e-04 &  2.03 &    8.25e-03 & 1.74 \\
\hline
\multicolumn{3}{|>{\columncolor{yellow!20}}c|}{ $\bm{u_3}$ }&\multicolumn{2}{|>{\columncolor{blue!20}}c|}{$\bm{u_4}$}\\
\hline
$n$ & $\|e_{\bm{u}} \|_{0}$ & $r=$ & $\|e_{\bm{u}}\|_{0}$ & $r=$ \\
\hline
2  &   2.27e-02 &  -     &   5.54e-02  & -     \\
4  &   6.55e-03 &  1.79  &   4.41e-02  & 0.33  \\
8  &   3.03e-03 &  1.11  &   3.00e-02  & 0.56  \\
16 &   1.38e-03 &  1.13  &   1.66e-02  & 0.85  \\
\hline
\end{tabular}
\end{center}
\end{table}

\subsection{Numerical tests on domains with complex topology}
The test problems involve three different type of domains, including (a)
a toroidal domain given specifically by
$\Omega_A = (-2,2)^3/H_A$, with $H_A=[-1,1]\times[-1,1]\times[-2,2]$; (b)
a cubic domain with a small hole inside, given by $\Omega_B = (-2,2)^3\textcolor{blue}{\backslash}H_B$, with $H_B=[-1,1]^3$; and (c) a domain with two holes given by $\Omega_C =(-2,2)\times (-2,6)\times (0, 1)/(H_C\cup H_D)$, with $H_C=[-1.5,1.5]\times [-1.5, 1.5]\times[0, 1]$ and $H_D =[-1.5,1.5] \times[2.5, 5.5]\times[0, 1]$. The domains are illustrated in Figure \ref{Hole-domain}.

The exact solutions of the test problems are given as follows:
\begin{equation*}
\bm{u}_5 =
\begin{bmatrix}
x + y +z\\
x-z\\
x + 3y
\end{bmatrix},\;\;
\bm{u}_6 =
\begin{bmatrix}
\sin(x)\sin(y)\sin(z)\\
xyz\\
(x+1)(y+1)(z+1)
\end{bmatrix}.
\end{equation*}
The right-hand side functions $f$ and $\bm{g}$ are computed to match the exact solutions for each test case. The tangential boundary condition is imposed on the boundary $\Gamma=\partial \Omega$.

\begin{figure}[h!]
\begin{center}
\subfigure[]{\label{Fig.sub3.l}
\includegraphics [width=0.3\textwidth]{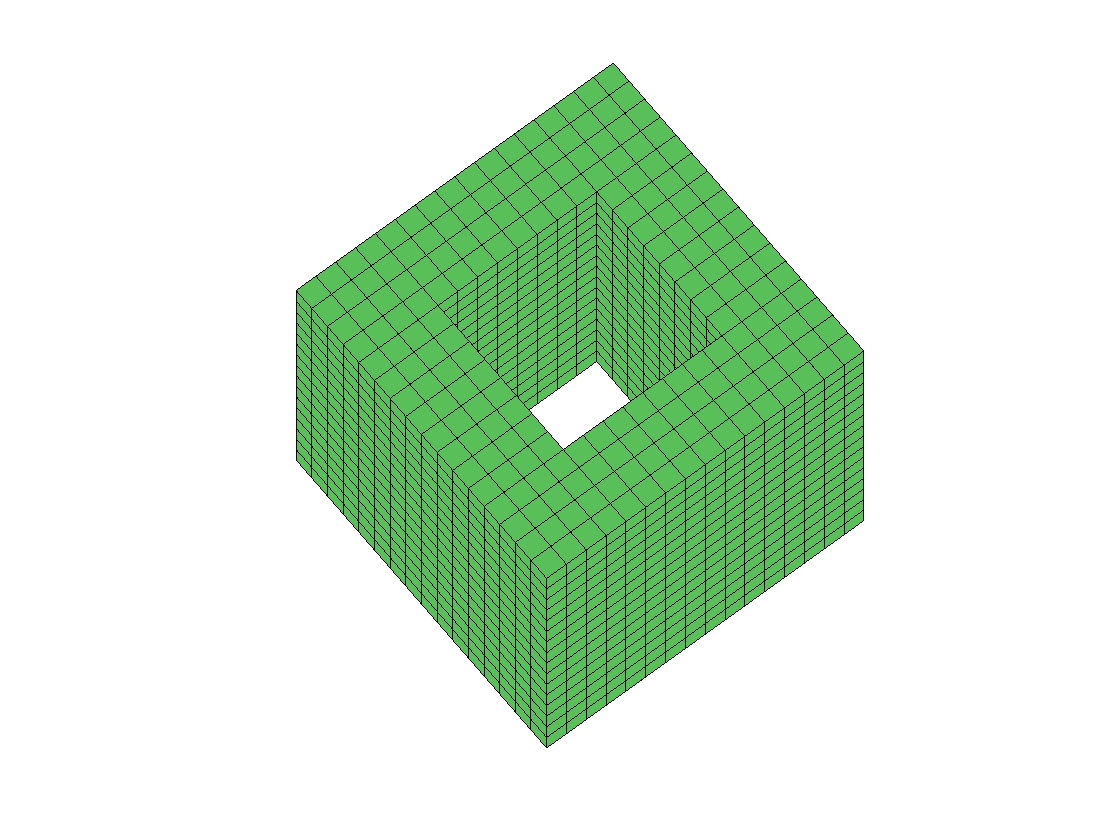}}
\subfigure[]{\label{Fig.sub3.2}
\includegraphics [width=0.3\textwidth]{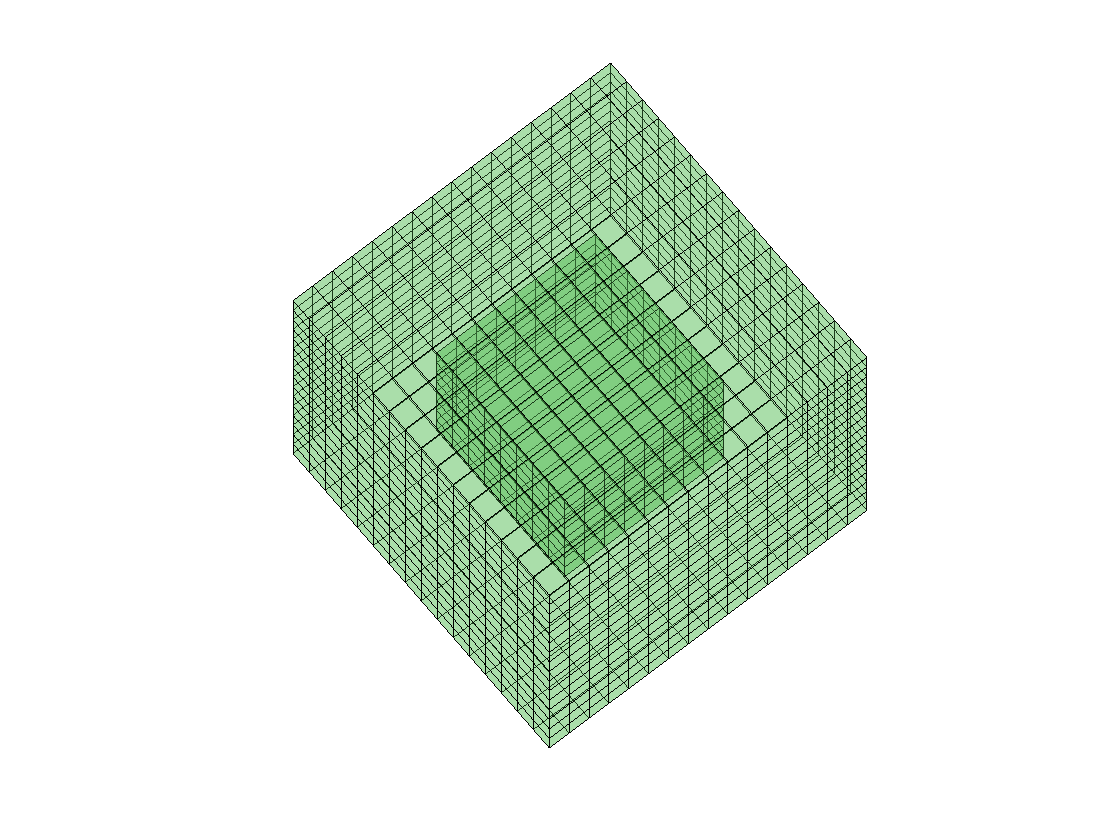}}
\subfigure[]{\label{Fig.sub3.3}
\includegraphics [width=0.3\textwidth]{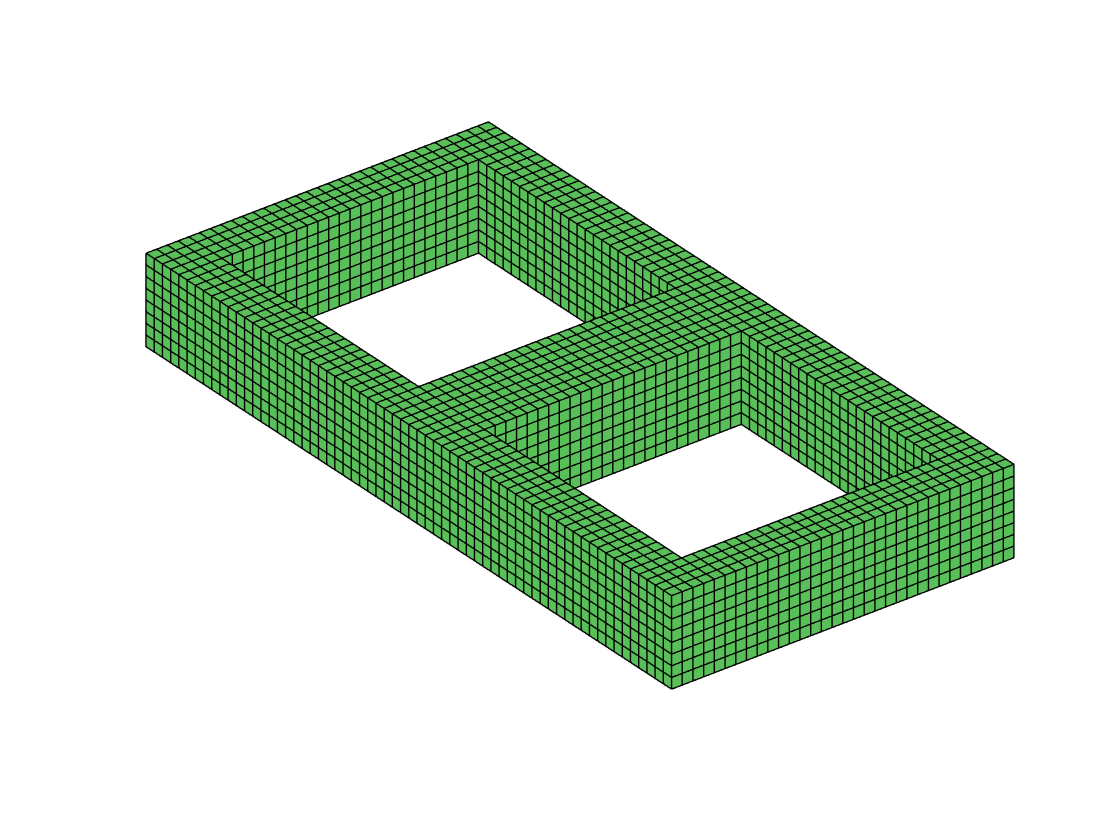}}
\end{center}
\caption{\label{Hole-domain} Multi-connected domains.}
\end{figure}

\begin{table}[!h]
\begin{center}
\caption{Numerical error and convergence performance of the PDWG scheme  for the div-curl system on cubic partitions. $r$ refers to the order of convergence in $O(h^r).$ }\label{table2.1}

\begin{tabular}{||c|cc|cc||}
\hline
\multicolumn{5}{|c|}{ domain a}\\
\hline
\multicolumn{3}{|>{\columncolor{mypink}}c|}{ $\bm{u_5}$ }&\multicolumn{2}{|>{\columncolor{green!15}}c|}{$\bm{u_6}$}\\
\hline
$n$ & $\|e_{\bm{u}} \|_{0}$ & $r=$ & $\|e_{\bm{u}}\|_{0}$ & $r=$   \\
\hline
2  &   1.13e-01 &  -    &    3.05e-01 &     \\
4  &   4.34e-02 &  1.38 &    1.19e-01 &  1.36\\
8  &   1.43e-02 &  1.60 &    4.20e-02 &  1.50\\
\hline
\hline
\multicolumn{5}{|c|}{ domain b}\\
\hline
\multicolumn{3}{|>{\columncolor{yellow!20}}c|}{ $\bm{u_5}$ }&\multicolumn{2}{|>{\columncolor{blue!20}}c|}{$\bm{u_6}$}\\
\hline
$n$ & $\|e_{\bm{u}} \|_{0}$ & $r=$ & $\|e_{\bm{u}}\|_{0}$ & $r=$ \\
\hline
2  &   1.14e-01 &  -     &   2.98e-01  & -     \\
4  &   3.97e-02 &  1.52  &   1.10e-01  & 1.44  \\
8  &   1.23e-02 &  1.69  &   3.68e-02  & 1.58  \\
\hline
\hline
\multicolumn{5}{|c|}{ domain c}\\
\hline
\multicolumn{3}{|>{\columncolor{yellow!20}}c|}{ $\bm{u_5}$ }&\multicolumn{2}{|>{\columncolor{blue!20}}c|}{$\bm{u_6}$}\\
\hline
$n$ & $\|e_{\bm{u}} \|_{0}$ & $r=$ & $\|e_{\bm{u}}\|_{0}$ & $r=$ \\
\hline
2  &   4.13e-02 &  -     &   3.27e-01  & -    \\
4  &   2.16e-02 &  0.94  &   1.87e-01  & 0.81 \\
8  &   7.93e-03 &  1.44  &   7.02e-02  & 1.42 \\
\hline
\end{tabular}
\end{center}
\end{table}
The numerical errors and convergence rates for the scheme \eqref{EQ:PDWG-3d:01:tangential} are reported in Table \ref{table2.1}. It can be seen that the numerical convergence has rates higher than the optimal rate of convergence of $r=1$ in all test cases. The computation thus outperforms the theory for the PDWG scheme \eqref{EQ:PDWG-3d:01:tangential}.
The numerical solutions are plotted in Figure \ref{Num-vec} for each test, which clearly indicate an excellent performance of the PDWG scheme \eqref{EQ:PDWG-3d:01:tangential}.

\begin{figure}[h!]
\begin{center}
\subfigure[Numerical vector field $\bm{u_5}$ on domain a (left) and  b (right) ]{\label{Fig.sub4.l}
\includegraphics [width=0.47\textwidth]{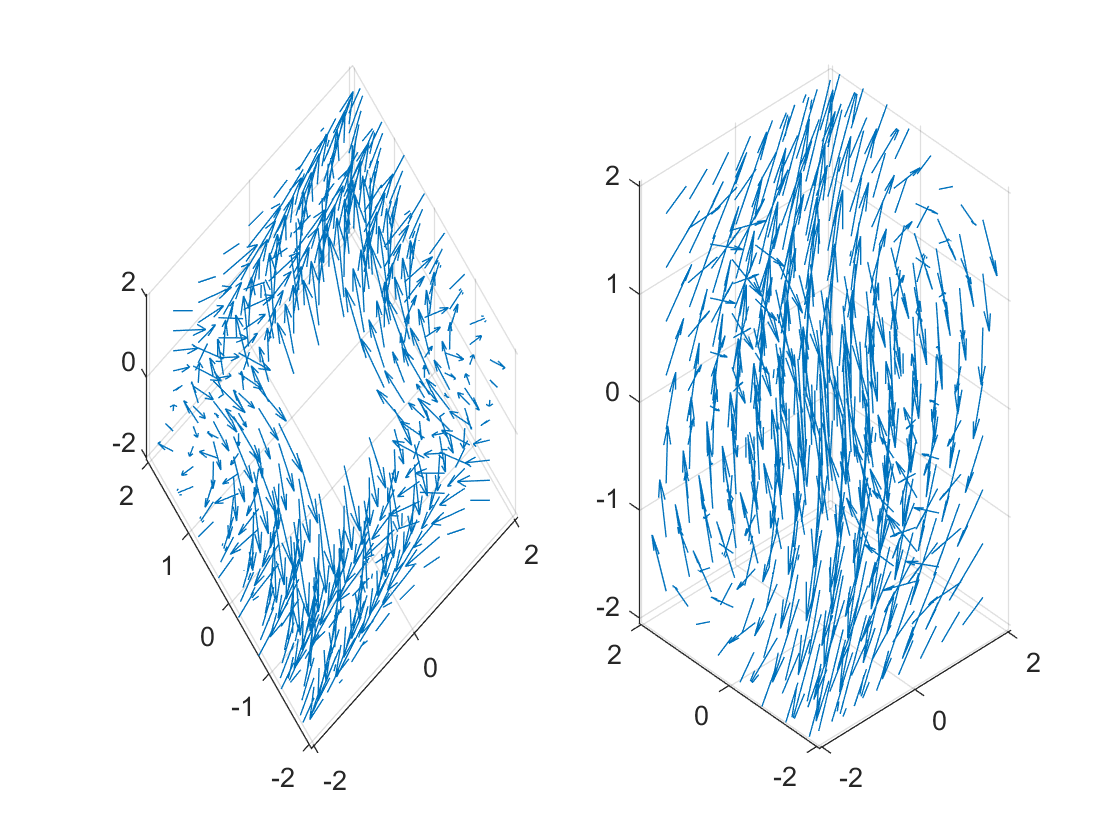}
\includegraphics [width=0.47\textwidth]{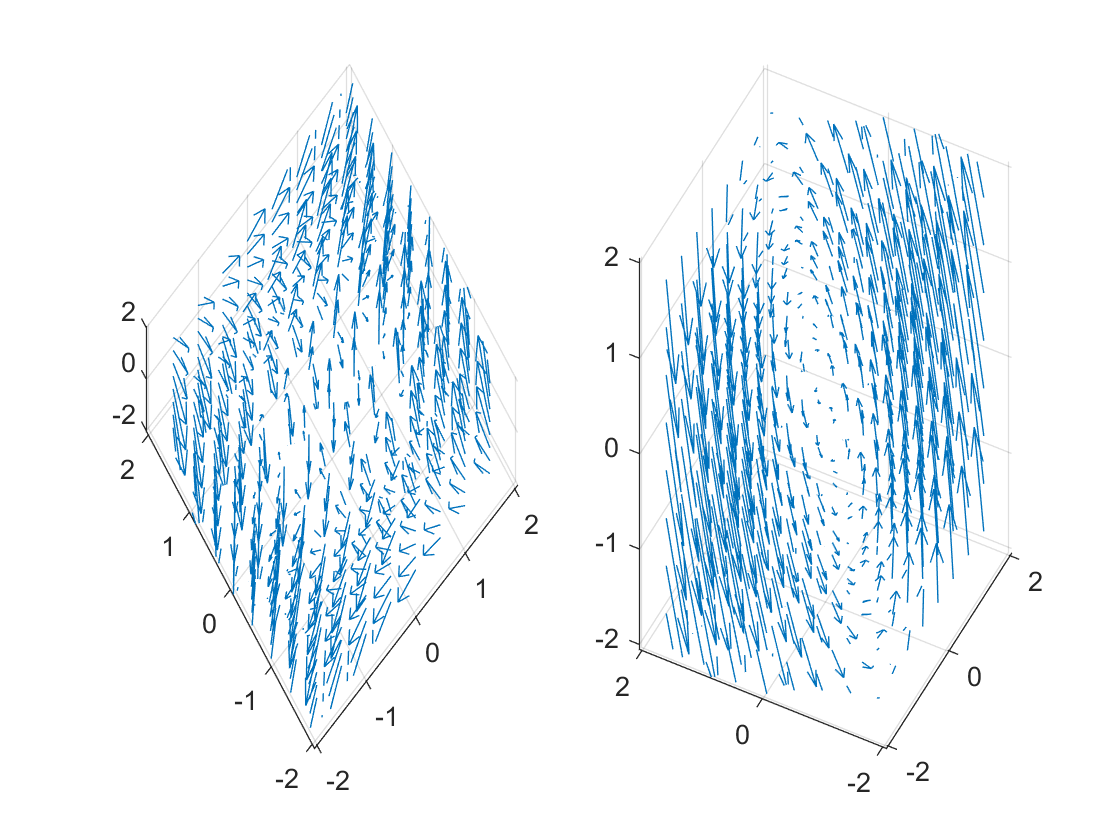}
}
\subfigure[Numerical vector field $\bm{u_6}$ on domain a (left) and  b (right)]{\label{Fig.sub4.2}
\includegraphics [width=0.47\textwidth]{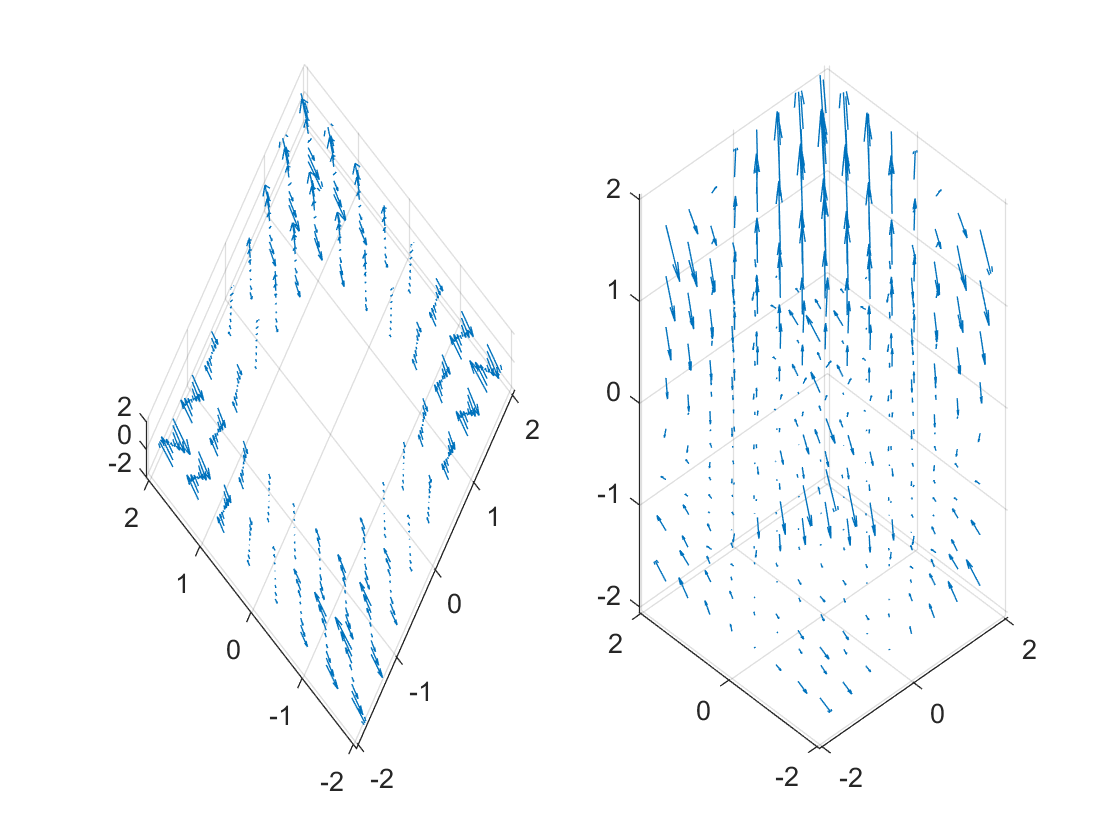}
\includegraphics [width=0.47\textwidth]{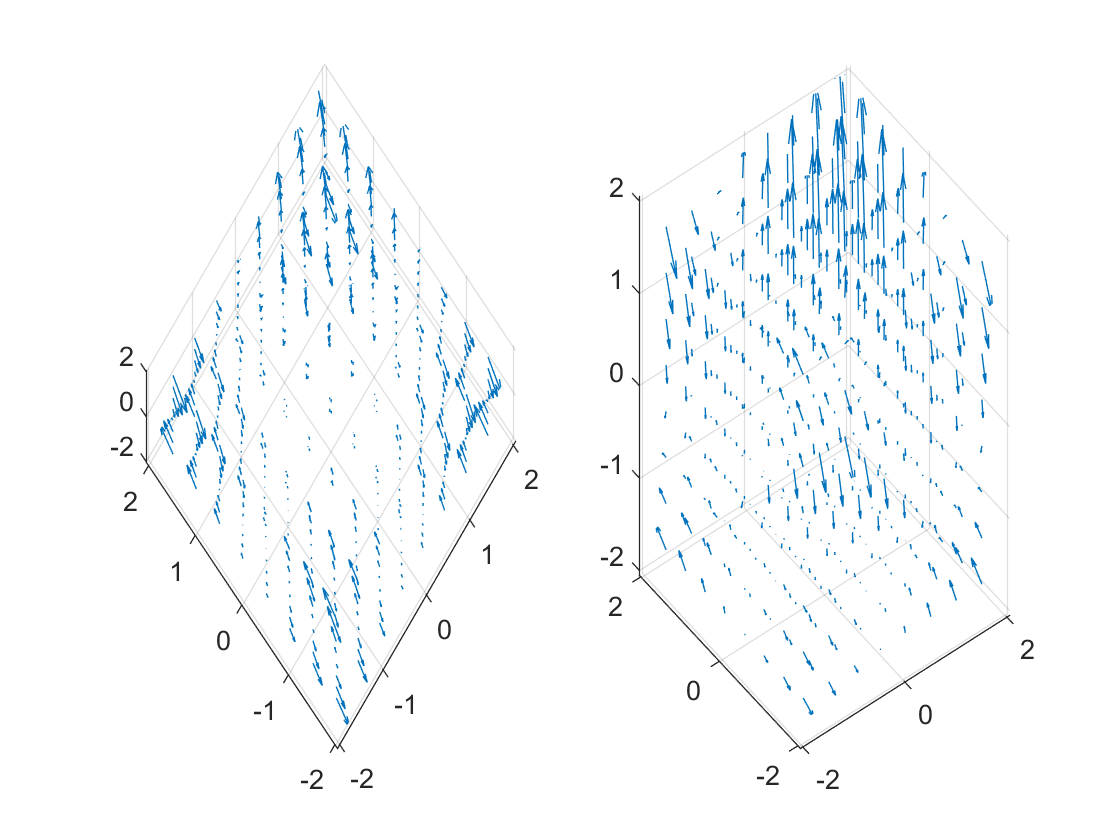}
}
\subfigure[Numerical vector field $\bm{u_5}$ on domain c]{\label{Fig.sub4.3}
\includegraphics [width=0.47\textwidth]{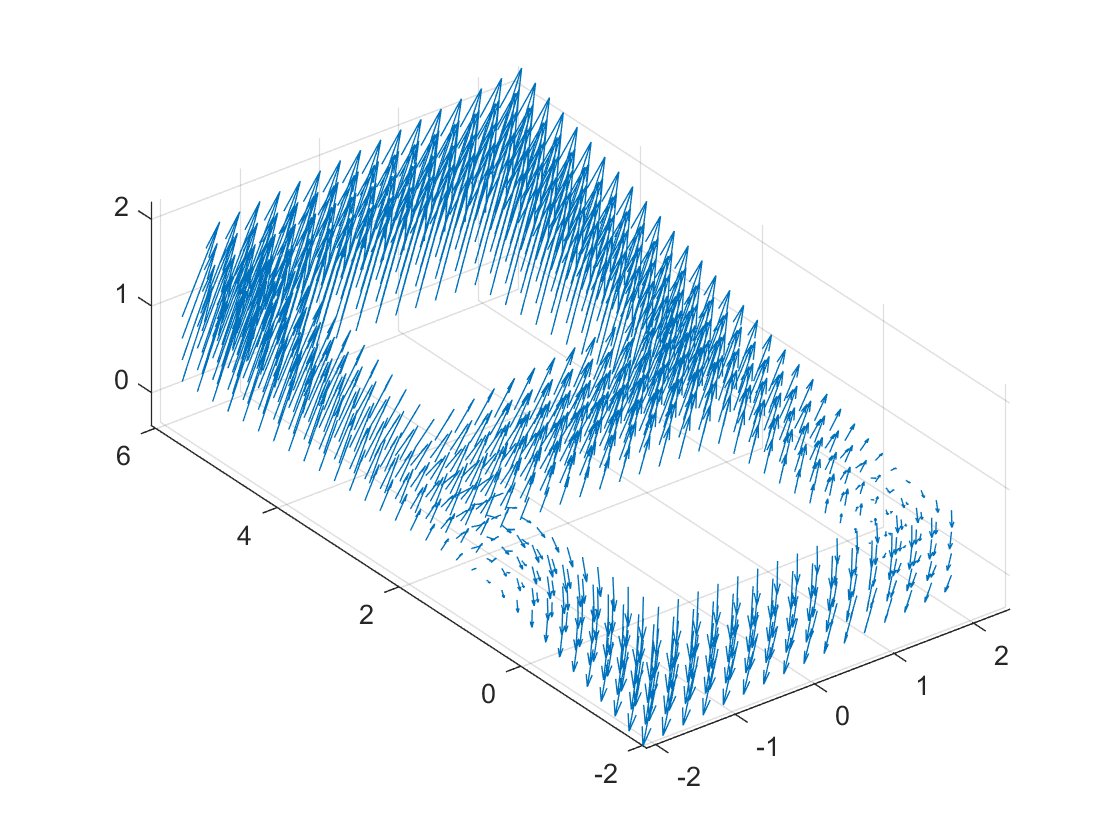}}
\subfigure[Numerical vector field $\bm{u_6}$ on domain c]{\label{Fig.sub4.4}
\includegraphics [width=0.47\textwidth]{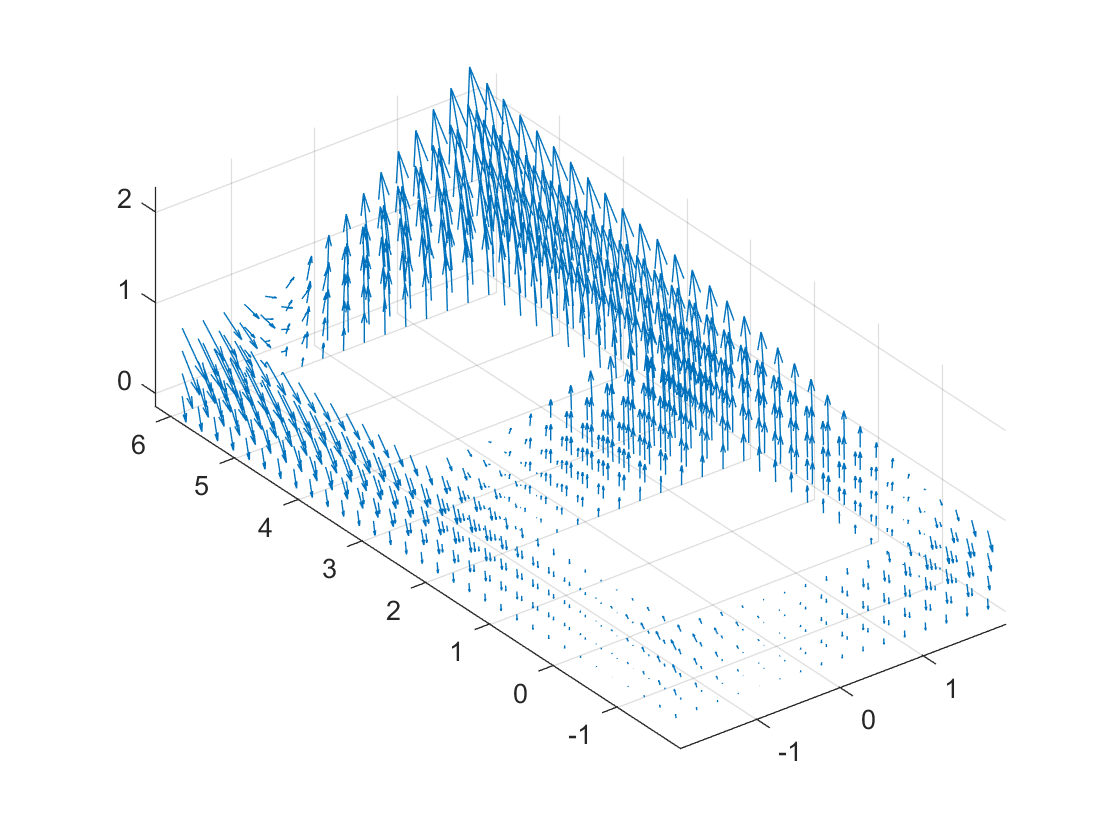}}
\end{center}
\caption{\label{Num-vec} PDWG approximate vector fields for $\bm{u_5}$ and $\bm{u_6}$ on various test domains.}
\end{figure}

\appendix

\section{Helmholtz Decomposition}
The following Helmholtz decomposition holds the key to the
derivation of a suitable variational form for the div-curl problem
\eqref{EQ:div-curl-1}-\eqref{EQ:div-curl-TBC2}.

\begin{theorem}\label{THM:helmholtz-2:02}
For any vector-valued function $\bu\in [L^2(\Omega)]^3$, there
exists a unique $\phi\in H_{0c}^1(\Omega)$ and a vector field $\bpsi\in H(curl;\Omega)$ such that
\begin{equation}\label{EQ:helmholtz-2:02}
\bu = \varepsilon^{-1}\nabla\times\bpsi + \nabla\phi,
\end{equation}
where $\bpsi$ additionally satisfies
\begin{equation}\label{EQ:additionalCond}
\nabla\cdot\bpsi=0,\quad \bpsi\cdot\bn =0 \quad \mbox{on } \partial\Omega.
\end{equation}
\end{theorem}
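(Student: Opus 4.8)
The plan is to peel off the scalar potential $\phi$ by solving an elliptic variational problem, then to recognize what remains as a curl by invoking a classical vector-potential theorem. First I would define $\phi\in H^1_{0c}(\Omega)$ as the unique solution of
\begin{equation*}
(\varepsilon\nabla\phi,\nabla\xi)=(\varepsilon\bu,\nabla\xi)\qquad\forall\;\xi\in H^1_{0c}(\Omega).
\end{equation*}
Existence and uniqueness of $\phi$ follow from the Lax--Milgram lemma: the bilinear form $(\varepsilon\nabla\cdot,\nabla\cdot)$ is bounded on $H^1_{0c}(\Omega)\times H^1_{0c}(\Omega)$; it is coercive because $\varepsilon$ is uniformly positive definite and a Poincar\'e inequality $\|\xi\|_0\le C\|\nabla\xi\|_0$ holds on $H^1_{0c}(\Omega)$, every such $\xi$ vanishing on the exterior component $\Gamma_0$; and $\xi\mapsto(\varepsilon\bu,\nabla\xi)$ is a bounded linear functional since $\bu\in[L^2(\Omega)]^3$.

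Next I would set $\bm{w}:=\varepsilon(\bu-\nabla\phi)\in[L^2(\Omega)]^3$ and verify that $\bm{w}$ is divergence free with vanishing flux through every connected component of $\Gamma$. Testing the variational equation against $\xi\in C_c^\infty(\Omega)$ gives $\nabla\cdot\bm{w}=0$ in the distributional sense, hence $\bm{w}\in H(div;\Omega)$ with $\nabla\cdot\bm{w}=0$ and a well-defined normal trace $\bm{w}\cdot\bn\in H^{-1/2}(\Gamma)$. Testing against a general $\xi\in H^1_{0c}(\Omega)$ and integrating by parts then gives $\sum_{i=1}^{L}c_i\langle\bm{w}\cdot\bn,1\rangle_{\Gamma_i}=0$ for arbitrary constants $c_i=\xi|_{\Gamma_i}$, so $\langle\bm{w}\cdot\bn,1\rangle_{\Gamma_i}=0$ for $i=1,\dots,L$; the remaining flux through $\Gamma_0$ then vanishes by the divergence theorem on $\Omega$.

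Because $\bm{w}$ is divergence free with zero flux through each component of $\partial\Omega$, the classical vector-potential theorem for bounded Lipschitz domains of arbitrary topology (see \cite{girault-raviart}) supplies a vector field $\bpsi$ with $\nabla\times\bpsi=\bm{w}$, $\nabla\cdot\bpsi=0$ in $\Omega$ and $\bpsi\cdot\bn=0$ on $\partial\Omega$; such $\bpsi$ lies in $H(curl;\Omega)$ because $\nabla\times\bpsi=\bm{w}\in[L^2(\Omega)]^3$. Then $\bu=\varepsilon^{-1}\bm{w}+\nabla\phi=\varepsilon^{-1}\nabla\times\bpsi+\nabla\phi$, which is \eqref{EQ:helmholtz-2:02}, and \eqref{EQ:additionalCond} holds by construction. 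This is the step where the topology of $\Omega$ is genuinely used, and it is the main obstacle: one must produce a single $\bpsi$ satisfying $\nabla\times\bpsi=\bm{w}$, $\nabla\cdot\bpsi=0$ and $\bpsi\cdot\bn=0$ on a domain that may have both handles (nonzero first Betti number) and cavities (the second Betti number $L$). The zero-flux conditions proved above are precisely what removes the obstruction tied to the second Betti number; the first Betti number affects only the non-uniqueness of $\bpsi$, which is determined modulo $\mathbb{H}_{n,0}(\Omega)$, and this is harmless since the theorem asserts uniqueness only for $\phi$. Care is therefore needed chiefly to cite the version of the potential theorem valid without simple connectedness and with disconnected boundary.

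Finally, for the uniqueness of $\phi$, suppose $\varepsilon^{-1}\nabla\times\bpsi_1+\nabla\phi_1=\varepsilon^{-1}\nabla\times\bpsi_2+\nabla\phi_2$ with $\phi_1,\phi_2\in H^1_{0c}(\Omega)$. Writing $\delta\phi=\phi_1-\phi_2$ and $\delta\bpsi=\bpsi_2-\bpsi_1$ gives $\varepsilon\nabla\delta\phi=\nabla\times\delta\bpsi$, whence $(\varepsilon\nabla\delta\phi,\nabla\delta\phi)=(\nabla\times\delta\bpsi,\nabla\delta\phi)=0$, the last identity being the same Green's formula already used in the proof of Theorem \ref{THM:dual-problem-TBVP} (the tangential trace of $\nabla\delta\phi$ vanishes on $\Gamma$ because $\delta\phi$ is locally constant there). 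Coercivity forces $\nabla\delta\phi=0$, and $\delta\phi|_{\Gamma_0}=0$ then yields $\delta\phi\equiv0$, completing the argument.
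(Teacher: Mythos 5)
Your proposal follows essentially the same route as the paper's proof: define $\phi$ by the variational problem $(\varepsilon\nabla\phi,\nabla s)=(\varepsilon\bu,\nabla s)$ on $H^1_{0c}(\Omega)$, check that $\varepsilon(\bu-\nabla\phi)$ is divergence free with zero flux through each boundary component, and then invoke Theorems 3.4--3.5 of Girault--Raviart to produce the vector potential $\bpsi$ with $\nabla\cdot\bpsi=0$ and $\bpsi\cdot\bn=0$. The only difference is that you supply details the paper leaves implicit (the Lax--Milgram and Poincar\'e justification, the verification of the flux conditions including the $\Gamma_0$ flux via the divergence theorem, and the uniqueness of $\phi$), all of which are correct.
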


\begin{proof}
A proof of the decomposition \eqref{EQ:helmholtz-2:02} has been given in \cite{WangWang_divcurl_2016}. Below is an outline of the proof.

For any vector field $\bu\in [L^2(\Omega)]^3$, let $\phi\in H_{0c}^1(\Omega)$ be the unique solution of the following problem:
$$
(\varepsilon \nabla\phi, \nabla s) = (\varepsilon\bu, \nabla s)\quad \forall\; s\in H_{0c}^1(\Omega).
$$
By letting $\bv=\bu-\nabla\phi$, it is not hard to see that
$$
\nabla\cdot(\varepsilon\bv)=0,\quad \langle \varepsilon\bv\cdot\bn, 1\rangle_{\Gamma_i}=0
$$
for $i=0,1,\cdots,L$. Thus, from
Theorem 3.4 of \cite{girault-raviart}, there exists a vector
potential \textcolor{blue}{field} $\bpsi\in [H^1(\Omega)]^3$ such that
\begin{eqnarray}\label{ForCM-02:01}
&&\varepsilon\bv = \nabla\times\bpsi,\ \nabla\cdot\bpsi=0,\\
&&\|\bpsi\|_1 \leqC  (\varepsilon\bv,\bv)^{\frac12}.\label{ForCM-02:02}
\end{eqnarray}
Furthermore, from Theorem 3.5 of \cite{girault-raviart}, among all the vector fields $\bpsi$ satisfying \eqref{ForCM-02:01}, we may choose $\bpsi\in H(curl;\Omega)$ such that
$$
\bpsi\cdot\bn = 0\qquad\mbox{on }\partial\Omega.
$$
\end{proof}
\medskip

It should be pointed out that the vector field $\bpsi$ in the Helmholtz decomposition \eqref{EQ:helmholtz-2:02} is not uniquely determined by the condition \eqref{EQ:additionalCond}, as nothing will change when $\bpsi$ is altered by any harmonic function $\mathbb{H}_{n,0}(\Omega)$. But the decomposition \eqref{EQ:helmholtz-2:02} would be unique when $\bpsi$ is restricted to the $L^2$-orthogonal complement of the harmonic space $\mathbb{H}_{n,0}(\Omega)$.

\clearpage

\end{document}